\newtheorem{theorem}{Theorem}
\newtheorem{lemma}{Lemma}
\newtheorem{proposition}{Proposition}
\theoremstyle{definition}
\newtheorem{remark}{Remark}
\newcommand\Tstrut{\rule{0pt}{2.5ex}}         
\newcommand\Bstrut{\rule[-1ex]{0pt}{0pt}}   
\newcommand{\1}{\bf 1}
\newcommand{\m}{m_{\sf f}}
\newcommand{\Q}{\mathbb{Q}}
\newcommand{\Sp}{\mathrm{Sp}}
\newcommand{\G}{\mathbf{G}_{ad}}
\newcommand{\GL}{\mathrm{GL}}
\newcommand{\SL}{\mathrm{SL}}
\newcommand{\PSL}{\mathrm{PSL}}
\newcommand{\F}{\mathbb{F}_p}
\newcommand{\CG}{{Z}}
\newcommand{\C}{\mathbb{C}}
\newcommand{\R}{\mathbb{R}}
\newcommand{\ZZ}{\mathbb{Z}}
\newcommand{\g}{\mathfrak{g}}
\newcommand{\h}{\mathfrak{h}}
\newcommand{\hh}{\mathrm{ht}}
\newcommand{\hei}{\mathfrak{u}}
\newcommand{\abel}{\mathfrak{a}}
\DeclareMathOperator{\ad}{ad}
\DeclareMathOperator{\Tr}{Tr}
\DeclareMathOperator{\Cay}{Cay}
\DeclareMathOperator{\car}{char}
\newcommand{\bx}[1]{\bar{x}_{#1}}
\newcommand{\FF}[1]{\mathbb{F}_{#1}}
\newcommand{\OO}[1]{\mathcal{O}/\mathfrak{p}^{#1}}
\newcommand{\Z}[1]{\mathbb{Z}/#1\mathbb{Z}}
\newcommand{\la}[2]{\langle#1,#2\rangle}
\newcommand{\res}[2]{\left.#1\right|_#2}
\title[Faithful representations of Chevalley groups]{Faithful representations of Chevalley groups over quotient rings of non-Archimedean local fields}
\author[M. Bardestani]{Mohammad Bardestani}
\address{Mohammad Bardestani, Department of Mathematics and Statistics, University of Ottawa, 585 King Edward, Ottawa, ON K1N
6N5, Canada.}
\email{mbardest@uottawa.ca}
\author[C. Karimianpour]{Camelia Karimianpour}
\address{Camelia Karimianpour, Department of Mathematics and Statistics, University of Ottawa, 585 King Edward, Ottawa, ON K1N
6N5, Canada.}
\email{ckari099@uottawa.ca}
\author[K. Mallahi-Karai]{Keivan Mallahi-Karai}
\address{Keivan Mallahi-Karai, Jacobs University Bremen, Campus Ring I, 28759 Bremen, Germany.}
\email{k.mallahikarai@jacobs-university.de }
\author[H. Salmasian]{Hadi Salmasian}
\address{Hadi Salmasian, Department of Mathematics and Statistics, University of Ottawa, 585 King Edward, Ottawa, ON K1N
6N5, Canada.}
\email{hadi.salmasian@uottawa.ca}
\begin{document}
\begin{abstract}
Let $F$ be a non-Archimedean local field  with the ring of integers $\mathcal{O}$ and the prime ideal $\mathfrak{p}$ and let $G=\G\left(\OO{n}\right)$ be the adjoint Chevalley group. Let $\m(G)$ denote the smallest possible dimension of a faithful representation of $G$. Using the Stone-von Neumann theorem, we determine a lower bound for $\m(G)$ which is asymptotically the same as the results of Landazuri, Seitz and Zalesskii for  split Chevalley groups over $\FF{q}$. Our result yields a conceptual explanation of the exponents that appear in the aforementioned results  
\end{abstract}
\subjclass{Primary 20C33; Secondary 20G35.}
\keywords{Chevalley groups; faithful representation; Heisenberg subgroups; Local fields; Stone-von Neumann theorem.}
\maketitle
\section{Introduction}
For a finite group $G$, let $\mathrm{Rep}_{\sf f}(G)$ denote the set of all finite dimensional faithful representations of $G$ over  complex vector spaces, and set
\[\m(G):=\min\{d_\rho\,:\,\rho\in\mathrm{Rep}_{\sf f}(G)\},
\]
where $d_\rho$ denotes the dimension (also called the degree) of $\rho$.
Lower bounds on $\m(G)$ can be found in group theory literature as old as the work of Frobenius~\cite{Frob-SL}. Indeed, by constructing the character table of $\PSL_{2}(\F)$, Frobenius  showed that
$$
\m\left(\PSL_{2}(\F)\right)\geq\frac{p-1}{2}\text{ for every prime }p\geq 5.
$$
Apart from its intrinsic interest, the Frobenius bound has applications in many questions in number theory and additive combinatorics. To name a few, Sarnak and Xue~\cite{Sarnak-Xue} were the first to use this bound to obtain a lower bound for the smallest non-trivial eigenvalue of the Laplace-Beltrami operator on the hyperbolic space. This idea was subsequently used by Bourgain and Gamburd~\cite{Bourgain-Gamburd} to answer the $1$-$2$-$3$ question of Lubotzky on the uniform expansion bounds for the Cayley graphs of $\mathrm{SL}_2\left(\F\right)$.

The Frobenius bound has been generalized by Landazuri, Seitz and Zalesskii~\cite{Landazuri-Seitz,Zalesskii} to other families of finite simple groups of Lie type. These bounds play an essential role in the theory of expander graphs and approximate groups~\cite{Breuillard,Lubo}. 
Some of the finite simple groups of Lie type are canonically obtained by reduction mod $\mathfrak p$ of the group $\mathbf G(\mathcal O)$ of $\mathcal O$-points of a Chevalley group, where $\mathcal O$ is the ring of integers of a local field. 
However, despite interesting applications (see below), little work has been done to extend the aforementioned bounds to reduction mod $\mathfrak p^n$ of $\mathbf G(\mathcal O)$. Bourgain and Gamburd~\cite{Bourgain-GamburdI} considered this problem for $\SL_2\left(\Z{p^n}\right)$ in order to show that, for any sufficiently large prime $p$ and any
 symmetric set $S$ generating a Zariski-dense subgroup of $\SL_2(\ZZ)$,
the family of Cayley graphs $\left\{\Cay\left(\SL_2\left(\Z{p^n}\right),\pi_{p^n}(S)\right)\right\}_{n\geq 1}$ is an expander family
(here $\pi_{p^n}$ is the reduction map modulo $p^n$). Indeed, they proved
$$
\m\left(\SL_2\left(\Z{p^n}\right)\right)\geq \frac{p^{n-2}(p^2-1)}{2}
\text{\ \  for }n\geq 2.
$$
Motivated by the works of Bourgain and Gamburd mentioned above,
the first and third authors of this paper studied $\m\left(\SL_k\left(\Z{p^n}\right)\right)$ and $\m\left(\Sp_{2k}\left(\Z{p^n}\right)\right)$~\cite{Bar-Mal}. The same problem for $\m\left(\SL_k\left(\Z{p^n}\right)\right)$ has  been considered by de Saxc{\'e}~\cite{Saxce}.

Let $F$ be a non-Archimedean local field with the ring of integers $\mathcal{O}$ and the prime ideal $\mathfrak{p}$. The order of residue field $\mathcal{O}/\mathfrak{p}$ is denoted by $q=p^l$ where $p$ is a prime number. Our aim in this paper is to obtain a bound for the minimal dimension of all faithful complex representations of adjoint Chevalley groups over the ring $\OO{n}$ where $n$ is a positive integer. Indeed for the Chevalley group $\G(\OO{n})$ associated  to a simple Lie algebra $\g$ we will obtain the following bound
\begin{equation}\label{asy}
\m(\G(\OO{n})) \geq C q^{\frac{n(r+1)}{2}},
\end{equation}
 where $r$ is the dimension of the nilpotent radical of the Heisenberg parabolic subalgebra of $\g$, and $C>0$ is an absolute constant (independent of $q$ and $r$). 

 We remark that all adjoint Chevalley groups over $\mathcal{O}/\mathfrak{p}$ (except for a few cases) are simple groups, and so all of their non-trivial representations are faithful. Therefore, our Theorem 
\ref{Main-Theorem} below 
 for $n=1$ yields lower bounds for non-trivial representations of $\G(\FF{q})$, which,  are asymptotically the same as the  results of~\cite{Landazuri-Seitz,Zalesskii} for split Chevalley groups over $\FF{q}$. By being asymptotically the same we mean that the exponents that appear in~\eqref{asy} are the same as those in the work Landazuri, Seitz and Zalesskii. For a corrigendum to~\cite{Landazuri-Seitz} the cases  ${\sf F}_4(q)$, $q$ odd, and $^2{\sf E}_6(q)$, we refer the reader to~\cite{Zalesskii}. 
 
Let us briefly sketch the idea of this paper. For a given simple Lie algebra $\g$, following Gross and Wallach's idea~\cite{Gross-Wallach}, we consider its Heisenberg parabolic subalgebra whose nilpotent radical is a two step nilpotent subalgeba. This nilpotent subalgebra gives rise to a two step nilpotent subgroup of the adjoint Chevalley group associated to $\g$ (this is the general $(2n+1)$-dimensional Heisenberg group for some $n$). The irreducible representations of a Heisenberg group are classified by their central characters via the Stone-von Neumann theorem. Given a faithful representation $\rho$ of $\G(\OO{n})$, we consider its restriction to the aforementioned Heisenberg subgroup and we find the polarizing subgroup of the generic character of the center. Our bound then is obtained by orbit counting of the action of a certain subgroup on an irreducible component of the representation $\rho$. Our main theorem is the following:
\begin{theorem}\label{Main-Theorem} Let $F$ be a non-Archimedean local field with the ring of integers $\mathcal{O}$, prime ideal $\mathfrak{p}$, and residue field $\mathcal{O}/\mathfrak p\cong\FF{q}$, where $q=p^l$ for a prime number $p$. Let $\g$ be a finite dimensional complex simple Lie algebra with root system $\Phi$. Let 
$$\G\left(\OO{n}\right):=\G\left(\OO{n},\Phi\right),$$
be the adjoint Chevalley group associated to $\g$. Then
$
\m\left(\G\left(\OO{n}\right)\right)\geq h_{\mathrm f}(\Phi,q,n)
$,
where $h_{\mathrm f}(\Phi,q,n)$ is given in the following table
\[
\begin{array}{||l c c c c l||}
\hline
\Phi &&&& &{h_{\mathrm{f}}(\Phi,q,n) }\Tstrut\Bstrut\\
\hline
{{\sf A}_1} &&{p\geq 3}&&{} &{\frac{1}{2}\left(q^n-q^{n-1}\right)}\Tstrut\Bstrut\\
\hline
{{\sf A}_m} &&{m\geq 2,\, p\geq 3}&& &{\left(q^n-q^{n-1}\right)q^{(m-1)n}}\Tstrut\Bstrut\\
\hline
{{\sf B}_m} &&{m\geq 3,\, p\geq 3}&& &{\left(q^n-q^{n-1}\right)q^{(2m-3)n}}\Tstrut\Bstrut\\
\hline
{{\sf C}_m} &&{m\geq 2,\, p\geq 3}&& &{\frac{1}{2}\left(q^n-q^{n-1}\right)q^{(m-1)n}}\Tstrut\Bstrut\\
\hline
{{\sf D}_m} &&{m\geq 4,\, p\geq 3}&& &{\left(q^n-q^{n-1}\right)q^{(2m-4)n}}\Tstrut\Bstrut\\
\hline
{{\sf G}_2}    &&{p\geq 5}&&           &{\left(q^n-q^{n-1}\right)q^{2n}}\Tstrut\Bstrut\\
\hline
{{\sf F}_4}    &&{p\geq 3}&&           &\left(q^n-q^{n-1}\right)q^{7n}\Tstrut\Bstrut\\
\hline
{{\sf E}_6}    &&{p\geq 3}&&           &{\left(q^n-q^{n-1}\right)q^{10n}}\Tstrut\Bstrut\\
\hline
{{\sf E}_7}    &&{p\geq 3}&&           &{\left(q^n-q^{n-1}\right)q^{16n}}\Tstrut\Bstrut\\
\hline
{{\sf E}_8}    &&{p\geq 3}&&           &{\left(q^n-q^{n-1}\right)q^{28n}}\Tstrut\Bstrut\\
\hline
\end{array}
\]
\end{theorem}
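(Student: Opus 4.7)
The plan is to exhibit inside $\G(\OO{n})$ an integral Heisenberg subgroup $H_n$ arising from the Heisenberg parabolic of $\g$, apply a Stone--von Neumann dimension count to the irreducible components of $\rho|_{H_n}$, and then use the Levi torus to force many such components to appear. Concretely, for each simple $\g$ other than type ${\sf A}_1$, the highest root $\tilde\alpha$ determines the Heisenberg parabolic whose nilpotent radical $\hei$ is a Heisenberg Lie algebra of dimension $r$, with centre $\CG=\g_{\tilde\alpha}$ one-dimensional and abelian quotient $\hei/\CG$ of dimension $r-1$ carrying the non-degenerate symplectic form $\omega$ obtained from the bracket $\hei\times\hei\to\CG$. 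Under the prime hypotheses ($p\geq 3$, or $p\geq 5$ in type ${\sf G}_2$) one integrates $\hei$ over $\OO{n}$ to a subgroup $H_n\subseteq\G(\OO{n})$ of order $q^{rn}$, with centre $Z_n\cong(\OO{n},+)$ and abelian quotient $H_n/Z_n\cong(\OO{n})^{r-1}$, the group commutator realising $\omega$ as an $\OO{n}$-bilinear form. A case-by-case tabulation gives $r=2m-1,\,4m-5,\,4m-7,\,5,\,15,\,21,\,33,\,57$ in types ${\sf A}_m,{\sf B}_m,{\sf D}_m,{\sf G}_2,{\sf F}_4,{\sf E}_6,{\sf E}_7,{\sf E}_8$ respectively, matching the exponents in the table via $(r-1)/2$.

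Call a character $\chi$ of $Z_n$ \emph{primitive} if $\chi(\mathfrak p^{n-1}Z_n)\neq 1$; there are exactly $q^n-q^{n-1}$ of these. For primitive $\chi$ the bi-character $(u,v)\mapsto\chi(\omega(u,v))$ on $(\OO{n})^{r-1}$ has trivial radical, so a Lagrangian $\OO{n}$-submodule provides a polarising subgroup of index $q^{n(r-1)/2}$ in $H_n$; the induced representation $\sigma_\chi$ is then irreducible of dimension $q^{n(r-1)/2}$ and, by the Stone--von Neumann theorem for finite Heisenberg groups, is the unique irreducible representation of $H_n$ with central character $\chi$. Faithfulness of $\rho$ enters at this point: if $\rho|_{Z_n}$ contained no primitive character then $\rho$ would be trivial on the cyclic group $\mathfrak p^{n-1}Z_n=\langle x_{\tilde\alpha}(\pi^{n-1})\rangle$ (for a uniformizer $\pi$), and the normal closure of this single element in the adjoint Chevalley group, produced via the standard commutator identities between root subgroups, is already non-trivial, contradicting $\ker\rho=1$. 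Hence at least one $\sigma_\chi$ with primitive $\chi$ occurs in $\rho|_{H_n}$.

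The extra factor $q^n-q^{n-1}$ comes from orbit inflation. The maximal split torus $T$ inside the Levi of the Heisenberg parabolic acts on $Z_n$ through $\tilde\alpha$ and hence on the set of primitive characters; for the types ${\sf A}_m$ ($m\geq 2$), ${\sf B}_m$, ${\sf D}_m$, ${\sf G}_2$, ${\sf F}_4$, ${\sf E}_6$, ${\sf E}_7$, ${\sf E}_8$ the composition $\tilde\alpha\colon T(\OO{n})\to\OO{n}^\times$ is surjective, giving one orbit of full size $q^n-q^{n-1}$. Because $\rho$ is $T$-equivariant, every conjugate $\sigma_{\chi^t}$ also appears in $\rho|_{H_n}$ and these are pairwise non-isomorphic, yielding
\[
\dim\rho\;\geq\;(q^n-q^{n-1})\,q^{n(r-1)/2}\;=\;h_{\mathrm f}(\Phi,q,n).
\]
In types ${\sf C}_m$ the image of $\tilde\alpha$ on the adjoint torus lies in $(\OO{n}^\times)^2$, so the orbit is halved and we recover the factor $\tfrac{1}{2}$; and type ${\sf A}_1$ is handled separately, where $\hei$ is already one-dimensional, so one works directly with $U_n\cong(\OO{n},+)$ and the bound $\tfrac{1}{2}(q^n-q^{n-1})$ comes from the analogous square-class phenomenon for the adjoint torus of $\mathbf{PSL}_2$.

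The principal obstacles I anticipate are: (i) the normal-closure assertion used to force primitivity, which requires case-by-case commutator computations with Chevalley generators to guarantee that $\langle x_{\tilde\alpha}(\pi^{n-1})\rangle$ has non-trivial normal closure in $\G(\OO{n})$ for every $\Phi$ and every $n$; (ii) constructing $H_n$ and identifying the commutator pairing with $\omega$ over $\OO{n}$ in positive residue characteristic, which is exactly what constrains the hypotheses $p\geq 3$ and $p\geq 5$; and (iii) the accounting of the orbit sizes in types ${\sf A}_1$ and ${\sf C}_m$, i.e.\ the explicit identification of the image of $\tilde\alpha$ on the adjoint torus as $(\OO{n}^\times)^2$ rather than all of $\OO{n}^\times$. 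Once these are in place, the exponent $n(r-1)/2$ emerges automatically from the dimensions of the Heisenberg nilpotent radicals, which coincide with the well-known dimensions of the minimal representations in the Gross--Wallach picture.
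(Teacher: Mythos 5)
Your proposal follows essentially the same route as the paper: restriction to the Heisenberg subgroup integrated from the Heisenberg parabolic's nilpotent radical, Stone--von Neumann to compute the dimension of an irreducible component with generic (primitive) central character, and conjugation by torus elements $h_\alpha(\lambda)$ to manufacture $q^n-q^{n-1}$ (or half that, for $\sf A_1$ and $\sf C_m$) pairwise non-isomorphic such components; the obstacles you flag are precisely where the paper invests its technical effort (genericity of the commutator pairing over $\OO{n}$, existence of a polarizing subgroup, the $F(\Phi)\in\{1,2\}$ dichotomy). One small simplification: the normal-closure argument in your obstacle (i) is unnecessary, because under the stated prime hypotheses the one-parameter map $t\mapsto\bar{x}_{\tilde{\beta}}(t)$ is injective on $\OO{n}$ (as in the paper's Lemma~\ref{Xbeta}), so $\bar{x}_{\tilde{\beta}}(\varpi^{n-1})$ is already a non-identity element of $\G(\OO{n})$ and faithfulness of $\rho$ directly forces some central character to be primitive.
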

\medskip

Unfortunately our proof of this theorem fails when $\car(\mathcal{O}/\mathfrak{p})=2$. We refer the reader to Remark~\ref{remaark-fail}, for further details. 
 
\begin{remark}
Ree~\cite{Ree} (see also~\cite[Theorem 11.3.2]{Carter}) proved that the groups $\G(\mathbb{F}_q)$ are indeed what one would expect to obtain, namely, $\mathrm{PSL}_{m}(\mathbb{F}_q)$ if $\g$ is of type ${\sf A}_{m-1}$;  $\mathrm{PSp}_{2m}(\mathbb{F}_q)$ if $\g$ is of type ${\sf C}_m$; $\mathrm{P\Omega_{2m}}(\mathbb{F}_q)$ if $\g$ is of type ${\sf D}_m$ and $\mathrm{P\Omega}_{2m+1}(\mathbb{F}_q)$ if $\g$ is of type ${\sf B}_m$ and $q\geq 3$. Moreover Chevalley proved that the group $\G(\mathbb{F}_q)$ is simple except for ${\sf A}_1(2)$, ${\sf A}_1(3)$, ${\sf B}_2(2)$ and ${\sf G}_2(2)$ (see~\cite[Theorem 11.1.2]{Carter}).
\end{remark}

\begin{remark}
The above theorem is also valid for \emph{simply connected} Chevalley groups with a similar proof. The sharpness of Landazuri--Seitz bounds (corresponding to the case $n=1$) has been the source of several investigations~\cite{Guralnick, Lubeck, Tiep}. It would be interesting to consider sharpness of our bounds. 
For type $A_{\ell}$, the permutation representation of $\mathrm{PSL}_{\ell+1}(\mathcal O/\mathfrak p^n)$ on the projection space $\mathbb{P}_{\mathcal O/\mathfrak p^n}^{\ell}$ is faithful of dimension $O(q^{\ell n})$, which is consistent with our lower bound. However, we do not know about sharpness of our bounds in general for other groups.
\end{remark}
\begin{remark}
Let us point out that the idea of restriction to nilpotent subgroups was also used in~\cite{Landazuri-Seitz}. Nevertheless, the arguments in~\cite{Landazuri-Seitz} are long and case by case. One of our main goals in writing this paper is to  give  a uniform argument based on the idea of Heisenberg parabolic subalgebras
to obtain lower bounds which,
in the special case of $\mathcal O/\mathfrak p$,
are asymptotically the same as those 
given in ~\cite{Landazuri-Seitz}. (See the discussion after
\eqref{asy} for a precise meaning.)
Such bounds are enough for the existing applications. Another important 
technical detail that has been worked out in our paper is 
to verify that many facts about  Chevalley groups over fields remain valid for Chevalley groups over rings of our interest (see Section~\ref{sectheheis}).
\end{remark}
\section{Notations and preliminaries}
In this section we set some notation which will be used throughout this paper. We also recall some basic facts about local fields that can be found in~\cite{Neukirch,Serre-Local}.

If $X$ is any set, $f$ any function on $X$, and $Y\subseteq X$ any subset, then $\res{f}{Y}$ is the restriction of $f$ to $Y$. $|X|$ is the cardinality of a finite set $X$. We will use the shorthand ${\bf e}(x) :=\exp(2\pi ix)$. For a given group $G$, its identity element is denoted by $\1$. Moreover $\car(F)$ is the characteristic of a given field.

 By the well-known classification of local fields, any non-Archimedean local field is isomorphic to a finite extension of $\Q_p$ ($p$ is a prime number) or is isomorphic to the field of formal Laurent series $\FF{q}((T))$ over a finite field with $q=p^l$ elements. For a non-Archimedean local field $F$ with the discrete valuation $\nu$, we will denote its ring of integers and its unique prime ideal by $\mathcal{O}$ and $\mathfrak{p}$, respectively. We will also fix a uniformizer $\varpi\in\mathfrak p$. 
For any integer $m\in\mathbb{Z}$, we write \[
\mathfrak{p}^m:=\{x\in F: \nu(x)\geq m\}.
\]
Then 
$\mathfrak{p}^{m}/\mathfrak{p}^{m+n}
\cong\mathcal{O}/\mathfrak{p}^n$ as
additive groups, for every $m,n\in\mathbb Z$ with $n>0$. Let $n$ be a positive integer. Our goal in this section is to describe all additive characters of the finite local rings $\OO{n}$ using the ring structure. 
 
From now on, if $\car(F)=0$ we set $E=\mathbb Q_p$ and if 
$\car(F)=p>0$ we set $E=F$. Now we define
\[
\Tr:=\Tr_{F/E}:F\to E,
\] the trace map of $F$ over $E$.
The {\it Dedekind's complementary module}, (or {\it inverse different}) is defined by 
$$
\mathcal{O}^*:=\{x\in F: \nu(\Tr(sx))\geq 0\,\, \text{for all $s\in\mathcal{O}$}\}.
$$
One can show that $\mathcal{O}^*$ is a fractional ideal of $F$ and hence for some $\ell\geq 0$ we have $\mathcal{O}^*=\varpi_F^{-\ell}\mathcal{O}=\mathfrak{p}^{-\ell}$. Throughout this paper $\ell$ designates this exponent. 
Note that $\ell=0$ when $\car(F)>0$. 
 
We will now construct additive characters $\psi: E \to \mathbb C^*$, where $E$ is as above. 
Let us first consider the zero characteristic case. For every $x\in\mathbb{Q}_p$, let $n_x$ be the smallest non-negative integer such that $p^{n_x}x\in\mathbb{Z}_p$. Let $r_x\in\mathbb{Z}$ be such that $r_x\equiv p^{n_x}x \pmod{p^{n_x}}$. It is easy to see that the following map (known as the Tate character)
\begin{equation}\label{additive-char}
\psi: \Q_p\to \C^*,\qquad x\mapsto {\bf e}(r_x/p^{n_x}),
\end{equation}
is a non-trivial additive character of $\Q_p$ with the kernel $\ZZ_p$.

Similarly for $\FF{q}((T))$, we have $\mathcal{O}=\FF{q}[[T]]$ and $\varpi=T$. We now set
\begin{equation}\label{Laur}
\psi: \FF{q}((T))\to \C^*,\qquad \sum_{i\geq N} a_i T^i \to {\bf e}\left(\Tr_{\mathbb F_q/\mathbb F_p}(a_{-1})/p\right).
\end{equation} 
Notice that the trace map from $\FF{q}$ to $\F$ is surjective. Hence, $\res{\psi}{{\mathfrak{\mathcal O}}}=1$
but $\res{\psi}{{\mathfrak{\mathfrak p}^{-1}}}\neq 1$
(sometimes we say that the conductor of $\psi$ is $\mathcal{O}=\FF{q}[[T]]$). 

 \begin{lemma}\label{Character-local} Let $F$ be a local field with the ring of integers $\mathcal{O}$ and prime ideal $\mathfrak{p}$. All additive characters of the ring $\OO{n}$ are given by
\begin{equation*}
\psi_{\bar{b}}: \OO{n}\to \C^*,\qquad
x+\mathfrak{p}^n\mapsto \psi(\Tr(bx)),
\end{equation*}
where $\bar{b}=b+\mathfrak{p}^{-\ell}\in\mathfrak{p}^{-(n+\ell)}/\mathfrak{p}^{-\ell}$. 
\end{lemma}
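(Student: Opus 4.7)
The plan is to realize
\[
\Psi: \mathfrak{p}^{-(n+\ell)}/\mathfrak{p}^{-\ell} \longrightarrow \widehat{\OO{n}}, \qquad \bar b\longmapsto \psi_{\bar b},
\]
as an isomorphism of abelian groups; because both sides will turn out to have order $q^n$, injectivity together with a counting argument will then finish the job. Two facts from the preceding paragraphs are used throughout: the defining property of the inverse different $\mathcal{O}^*=\mathfrak{p}^{-\ell}$, and the fact that the chosen character $\psi\colon E\to\C^*$ is trivial on $\mathcal{O}_E$ but nontrivial on $\mathfrak{p}_E^{-1}$.

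First I would verify that $\Psi$ is well-defined. For any $b\in\mathfrak{p}^{-(n+\ell)}$ the formula $x\mapsto\psi(\Tr(bx))$ is an additive character of $\mathcal{O}$; to see that it descends to $\OO{n}$, note that if $x\in\mathfrak{p}^n$ then $bx\in\mathfrak{p}^{-\ell}=\mathcal{O}^*$, so by the definition of $\mathcal{O}^*$ one has $\Tr(bx)\in\mathcal{O}_E$, and then $\res{\psi}{\mathcal{O}_E}=1$ gives $\psi(\Tr(bx))=1$. To check that $\psi_{\bar b}$ depends only on the class $\bar b$: if $b'\in\mathfrak{p}^{-\ell}$ and $x\in\mathcal{O}$, then $b'x\in\mathfrak{p}^{-\ell}$ and the same argument yields $\psi(\Tr(b'x))=1$, so $\psi_{\overline{b+b'}}=\psi_{\bar b}$. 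Additivity of $\Tr$ then makes $\Psi$ a group homomorphism.

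Next I would establish injectivity. Suppose $\psi_{\bar b}\equiv 1$ on $\OO{n}$, so that $\Tr(b\mathcal{O})\subseteq\ker\psi$. Since $\mathcal{O}_E\subseteq\mathcal{O}$ and the trace is $\mathcal{O}_E$-linear, $\Tr(b\mathcal{O})$ is an $\mathcal{O}_E$-submodule of $E$, hence of the form $\mathfrak{p}_E^{k}$ for some $k\in\ZZ\cup\{+\infty\}$. Non-triviality of $\psi$ on $\mathfrak{p}_E^{-1}$ forces $k\geq 0$, that is, $\Tr(b\mathcal{O})\subseteq\mathcal{O}_E$, which by the defining property of $\mathcal{O}^*$ yields $b\in\mathcal{O}^*=\mathfrak{p}^{-\ell}$, i.e.\ $\bar b=0$. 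Finally, the additive isomorphism $\mathfrak{p}^{-(n+\ell)}/\mathfrak{p}^{-\ell}\cong\OO{n}$ recalled earlier shows that the domain of $\Psi$ has cardinality $q^n=|\widehat{\OO{n}}|$, so the injection $\Psi$ must be bijective. The main technical point is the $\mathcal{O}_E$-module reduction in the injectivity step, which amounts to identifying $\mathcal{O}^*$ as the conductor of the composite character $\psi\circ\Tr$ on $F$; granted this, the remaining steps are purely formal.
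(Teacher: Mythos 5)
Your proof is correct, and the skeleton is the same as the paper's: show injectivity of $\bar b\mapsto\psi_{\bar b}$ using the defining property of $\mathcal{O}^*$, then compare cardinalities via $\mathfrak{p}^{-(n+\ell)}/\mathfrak{p}^{-\ell}\cong\mathcal{O}/\mathfrak{p}^n$. The one notable difference is organizational: the paper splits into two cases, arguing directly from $\ker\psi=\ZZ_p$ in characteristic zero and from the conductor of $\psi$ being $\mathcal{O}=\FF{q}[[T]]$ in positive characteristic, whereas you unify both cases by observing that $\Tr(b\mathcal{O})$ is an $\mathcal{O}_E$-submodule of $E$, hence of the form $\mathfrak{p}_E^k$, and that non-triviality of $\psi$ on $\mathfrak{p}_E^{-1}$ forces $k\geq 0$. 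This buys a single argument that needs only the conductor condition on $\psi$ (rather than the exact kernel $\ZZ_p$), and you also spell out well-definedness in both variables, which the paper only asserts for the positive-characteristic case. Neither gain is mathematically essential here, but your version is a little cleaner and more uniform.
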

\begin{proof}
First assume that $\car(F)=0$, that is, $F$ is a $p$-adic field. Let $\bar{b}_1=b_1+\mathfrak{p}^{-\ell}$ and $\bar{b}_2=b_2+\mathfrak{p}^{-\ell}$ be distinct elements and assume that $\psi_{\bar{b}_1}=\psi_{\bar{b}_2}$. Then for all $x\in \mathcal{O}$ we have $\psi(\Tr((b_1-b_2)x))=1$, which implies that $\Tr((b_1-b_2)x)\in \ZZ_p$. Thus $b_1-b_2\in \mathfrak{p}^{-\ell}$, which is a contradiction. This construction provides exactly $|\mathfrak{p}^{-(n+\ell)}/\mathfrak{p}^{-\ell}|$ distinct additive characters.  Since $|\OO{n}|=|\mathfrak{p}^{-(n+\ell)}/\mathfrak{p}^{-\ell}|$, we are done.

Next assume that $\car(F)>0$. It is clear that the map $\psi_{\bar{b}}$ is well defined. Now suppose for some $b\in\mathfrak{p}^{-n}$ we have $\psi(bx)=1$ for all $x\in\mathcal{O}$. Hence the fractional ideal $b\mathcal{O}$ is a subset of $\ker(\psi)$. Therefore $b\in\mathcal{O}$ since the conductor of $\psi$ is $\mathcal{O}$. This construction provides exactly $|\mathfrak{p}^{-n}/\mathcal{O}|$ distinct additive characters.  Since $|\OO{n}|=|\mathfrak{p}^{-n}/\mathcal{O}|$, we are done.
\end{proof}
\section{The Stone-von Neumann theorem}
In this section, we state a version of Stone-von Neumann theorem that suits our purposes in this paper. The Stone-von Neumann theorem holds in a broader setting~\cite{Howe,Mcnamara,Marie-France}. However, we only present it in the finite group case, which is needed in this paper. 

 Let $U$ be a finite two step nilpotent group, and let $Z(U)$ denote its center. If $A$ is any subgroup of $U$
containing $Z(U)$, we will denote $\bar{A}:= A/Z(U)$. 
Let $\chi:Z(U)\to \mathbb C^*$ be a one-dimensional representation of $Z(U)$. We define a pairing  
$$
U/Z(U)\times U/Z(U)\to\mathbb C^*\ ,\ \left\langle xZ(U),yZ(U)\right\rangle_\chi:=\chi\left([x,y]\right).
$$
We call $\chi$  a {\it generic character of $Z(U)$} if the above pairing is non-degenerate, in the sense that for every $x\in U$, 
if $\left\langle xZ(U),yZ(U)\right\rangle_\chi=1$ for every $y\in U$, then $x\in Z(U)$.
Assuming $\chi$ is generic character of $Z(U)$, we say that a subgroup $Z(U)\leq A\leq U$ is {\it isotropic} if $\bar{A}\subseteq \bar{A}^\perp$, where
\[
\bar{A}^\perp:=\left\{xZ(U)\,:\,
\left\langle xZ(U),yZ(U)\right\rangle_\chi=1\text{ for all }y\in A
\right\}
.\] 
We say that $A$ is {\it polarizing} if $\bar{A} =\bar{A}^\perp$.
For the next theorem we refer the reader to~\cite[$\S$4.1]{Bump}. 
\begin{theorem}[Stone-von~Neumann theorem]\label{SV} Let $U$ be a finite two step nilpotent group, and let $\chi$ be a generic character of $Z(U)$. Then
there exists a unique isomorphism class of irreducible representations of $U$ with central character $\chi$. Such a representation may be constructed as follows:
Let $A$ be any polarizing subgroup of $U$, and let $\tilde{\chi}$ be any extension of $\chi$ to $A$.
Then the representation $\mathrm{Ind}_A^U(\tilde{\chi})$ is of this class.
\end{theorem}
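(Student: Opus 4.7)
The plan is to prove the theorem in four steps, all tailored to the finite two-step nilpotent setting. First, I would set up the symplectic picture. Since $U$ is two-step nilpotent, $[U,U]\subseteq Z(U)$ and the commutator descends to a biadditive map $\bar U\times\bar U\to Z(U)$; composing with $\chi$ yields the pairing $\la{\cdot}{\cdot}_\chi$, which is alternating (since $\chi([u,u])=1$) and non-degenerate by the genericity of $\chi$. An elementary induction then supplies a polarizing subgroup: starting from $Z(U)$, whenever $\bar A\subsetneq\bar A^\perp$ I would adjoin any $\bar x\in\bar A^\perp\setminus\bar A$, observe that the enlarged subgroup remains isotropic (using alternation to handle the diagonal terms), and iterate. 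The identity $|\bar A|\cdot|\bar A^\perp|=|\bar U|$, valid for any subgroup by non-degeneracy, then forces $|\bar A|=|\bar U|^{1/2}$ for polarizing $A$, so $[U:A]=|\bar U|^{1/2}$.

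Next, for a polarizing $A$ the condition $\bar A=\bar A^\perp$ gives $\chi([A,A])=1$, so $\chi$ descends to a character of $Z(U)/[A,A]$, which sits inside the finite abelian group $A/[A,A]$. Since $\mathbb C^*$ is divisible, every character of a subgroup of a finite abelian group extends, producing a character $\tilde\chi$ of $A$ restricting to $\chi$ on $Z(U)$.

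The technical heart is the character of $\pi:=\Ind_A^U\tilde\chi$. Because $U$ is two-step nilpotent, $g^{-1}ug=u\cdot[u,g^{-1}]$ with $[u,g^{-1}]\in Z(U)\subseteq A$, so $g^{-1}ug\in A$ iff $u\in A$, and the Frobenius formula reduces to a sum over $u\in A$. For such $u$, the map $g\mapsto[u,g^{-1}]$ is a homomorphism $U\to Z(U)$ (a standard two-step identity), giving
\[
\pi(u)=\frac{\tilde\chi(u)}{|A|}\sum_{g\in U}\chi([u,g^{-1}]).
\]
The inner character sum equals $|U|$ when $g\mapsto\chi([u,g^{-1}])$ is the trivial character of $U$, and vanishes otherwise. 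Triviality means $\bar u\in\bar U^\perp$, which by non-degeneracy forces $u\in Z(U)$. Hence $\pi$ is supported on $Z(U)$ with $\pi(z)=[U:A]\chi(z)$, and computing $\la{\pi}{\pi}_U=\frac{|Z(U)|[U:A]^2}{|U|}=1$ proves irreducibility.

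Uniqueness follows from Peter--Weyl: the $|\bar U|$-dimensional space of functions $f:U\to\mathbb C$ with $f(zu)=\chi(z)f(u)$ decomposes as $\bigoplus_\sigma\sigma^*\otimes\sigma$ over irreducibles $\sigma$ with central character $\chi$, yielding $\sum_\sigma d_\sigma^2=|\bar U|$. For any such $\sigma$, the restriction $\sigma|_A$ has $[A,A]$ in its kernel (since $\chi|_{[A,A]}=1$) and hence decomposes into one-dimensional extensions of $\chi$; by Frobenius reciprocity $\sigma$ is a constituent of $\Ind_A^U\tilde\chi'$ for some extension $\tilde\chi'$, and the irreducibility already established forces $\sigma\cong\Ind_A^U\tilde\chi'$ of dimension $|\bar U|^{1/2}$. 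The dimension identity then allows only one such $\sigma$. The step I expect to be the main obstacle is the character-sum vanishing in the third paragraph, since this is the precise point where non-degeneracy of the pairing (equivalently, genericity of $\chi$) is used; the remaining steps are essentially finite-group bookkeeping.
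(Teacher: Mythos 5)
The paper does not actually prove this statement; it defers entirely to \cite[\S 4.1]{Bump}, so there is no in-paper argument to compare against. Your proposal is a correct, self-contained proof for the finite case, and all four ingredients check out. The inductive construction of a polarizing subgroup, together with $|\bar A|\cdot|\bar A^\perp|=|\bar U|$, is valid because the non-degenerate bimultiplicative pairing identifies the finite abelian group $\bar U$ with its Pontryagin dual. The extension of $\chi$ to $A$ via divisibility of $\mathbb{C}^*$ works because $\chi([A,A])=1$ is exactly the isotropy of $A$, so $\chi$ factors through a subgroup of the abelian group $A/[A,A]$. The Frobenius character computation is the heart of the matter: in a two-step nilpotent group the map $g\mapsto\chi([u,g])$ is a character of $U$, non-trivial for $u\notin Z(U)$ precisely by genericity of $\chi$, so $\chi_{\pi}$ is supported on $Z(U)$ and $\langle\chi_\pi,\chi_\pi\rangle=|Z(U)|[U:A]^2/|U|=1$ follows at once from $[U:A]^2=|\bar U|$. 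Finally, the Peter--Weyl count $\sum_\sigma d_\sigma^2=|\bar U|$ over irreducibles with central character $\chi$, combined with the observation that your irreducibility computation applies verbatim to $\Ind_A^U\tilde\chi'$ for \emph{every} extension $\tilde\chi'$ (so every such $\sigma$ has dimension $|\bar U|^{1/2}$), forces exactly one isomorphism class and shows each $\Ind_A^U\tilde\chi'$ realizes it. For comparison, the cited reference works with the $p$-adic Heisenberg group and proves irreducibility via Mackey theory and intertwining operators; your direct character-sum argument is the more elementary route and is well suited to the finite setting used in this paper.
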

\section{The Heisenberg parabolic subalgebra}
\label{sectheheis}
This section is devoted to a rapid review of some basic facts in the theory of simple Lie algebras. We closely follow Gross and Wallach's paper~\cite{Gross-Wallach}, Sections 1 and 2 (see also~\cite[$\S$3]{Hadi}). 
Let $\g$ be a complex finite dimensional simple Lie algebra. Fix a Cartan subalgebra $\h$ of $\g$. Let $\Phi\subseteq \h^*$ be the root system of $\g$ with respect to $\h$. Then, we have the Cartan decomposition
\begin{equation}\label{Cartan-decomp}
\g=\h\oplus\bigoplus_{\alpha\in \Phi}\g_\alpha,
\end{equation}
where $\g_\alpha=\{x\in\g :\,[H,x]=\alpha(H)x,\,\forall  H\in\h\}$. 
Let $E=\mathrm{Span}_{\R}\{\alpha\:|\:\alpha\in{\Phi}\}$. Note that $E$ is equipped with a symmetric positive definite inner product $(\,,\,)$ obtained from the Killing form of $\g$ via the isomorphism between $\h$ and $\h^*$. For $\alpha,\beta\in \Phi$, set $\langle\alpha,\beta\rangle=2(\alpha,\beta)/(\beta,\beta)$.
Let $\beta\neq \pm\alpha$ be two independent roots.  Assume that $\|\beta\|\geq \|\alpha\|$. 
Then the values of $\la{\alpha}{\beta}$ and $\la{\beta}{\alpha}$ are given by 
Table~\ref{Root structure}
(see~\cite[Table 1, $\S$9.4]{Humphreys}).
\begin{table}[H]
\caption{Root structure}\label{Root structure}
\centering
\resizebox{5.2cm}{!}{
\begin{tabular}{r r c r}
\hline \Tstrut\Bstrut
  $\la{\alpha}{\beta}$ & $\la{\beta}{\alpha}$ & {} & $\left(\|\beta\|/\|\alpha\|\right)^2$ \\
  \hline \Tstrut\Bstrut
   {$0$} & {$0$} & {} & \text{undetermined}  \\
   {$1$} & {$1$} & {} & {$1$}  \\
   {$-1$} & {$-1$} & {} & {$1$}  \\
   {$1$} & {$2$} & {} & {$2$}  \\
   {$-1$} & {$-2$} & {} & {$2$}  \\
   {$1$} & {$3$} & {} & {$3$}  \\
   {$-1$} & {$-3$} & {} & {$3$}  \\
 \end{tabular}}
 \end{table}
Let $\Delta$ be a base of $\Phi$. Let $\Phi^+\subseteq \Phi$ the set of positive roots with respect to $\Delta$, and let $\tilde{\beta}$ be the highest root. It is known that $\tilde\beta$ is a long root and $m_\alpha\geq n_\alpha$, where $\tilde{\beta}=\sum_{\alpha\in\Delta}m_{\alpha}\alpha$ and $\gamma=\sum_{\alpha\in\Delta}n_\alpha\alpha$ is any $\gamma\in\Phi$. Given the above notation, we define the {\it Heisenberg parabolic subalgebra} $\mathfrak{q}=\mathfrak{l}\oplus\hei$. The Levi subalgebra and the nilpotent radical of $\mathfrak{q}$ are:
$$
\mathfrak{l}=\h\oplus\bigoplus_{\substack{\alpha\in\Phi\\\langle \alpha,\tilde{\beta}\rangle= 0}}\g_\alpha,\qquad \text{and}\qquad \hei=\bigoplus_{\substack{\alpha\in\Phi\\\langle \alpha,\tilde{\beta}\rangle> 0}}\g_\alpha.
$$
\begin{lemma}\label{Lemma-rootproperty} The inequality $\langle\alpha,\tilde{\beta}\rangle> 0$ implies that $\alpha\in\Phi^+$, and either $\alpha=\tilde{\beta}$ or $\langle\alpha,\tilde{\beta}\rangle=1$. Moreover, if $\langle\alpha,\tilde{\beta}\rangle=1$, then 
$\tilde{\beta}-\alpha\in\Phi^+$ and 
$\langle\tilde{\beta}-\alpha,\tilde{\beta}\rangle=1$.
\end{lemma}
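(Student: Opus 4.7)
The plan is to split the lemma into three sub-claims and handle each using (i) maximality of $\tilde{\beta}$ as the highest root (to control the sign of $\la{\cdot}{\tilde{\beta}}$), (ii) the fact that $\tilde{\beta}$ is long together with Table~\ref{Root structure} (to control the magnitude), and (iii) the coefficient inequality $m_\gamma\ge n_\gamma$ recalled just before the lemma (to guarantee positivity of $\tilde{\beta}-\alpha$). Throughout, I would use freely that $\la{\alpha}{\beta}$ and $\la{\beta}{\alpha}$ share a sign, since both are positive multiples of $(\alpha,\beta)$.

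First I would show by contraposition that $\la{\alpha}{\tilde{\beta}}>0$ forces $\alpha\in\Phi^+$. If $\alpha\in\Phi^-$ equals $-\tilde{\beta}$ then $\la{\alpha}{\tilde{\beta}}=-2$. Otherwise $-\alpha\in\Phi^+$ is distinct from $\tilde{\beta}$, so maximality of $\tilde{\beta}$ forces $\tilde{\beta}+(-\alpha)\notin\Phi$; applying the standard root-string identity $p-q=-\la{\tilde{\beta}}{-\alpha}$ with $p=0$ yields $\la{\tilde{\beta}}{-\alpha}\ge 0$, and the sign equality gives $\la{\alpha}{\tilde{\beta}}\le 0$. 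Next, assuming $\la{\alpha}{\tilde{\beta}}>0$ and $\alpha\ne\tilde{\beta}$, the previous step already excludes $\alpha=-\tilde{\beta}$; since $\tilde{\beta}$ is long we have $\|\tilde{\beta}\|\ge\|\alpha\|$, so Table~\ref{Root structure} forces $\la{\alpha}{\tilde{\beta}}\in\{-1,0,1\}$, and positivity pins the value to $1$.

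For the ``moreover'' statement I would first verify that $\tilde{\beta}-\alpha$ is actually a root: since $\la{\tilde{\beta}}{\alpha}$ has the same sign as $\la{\alpha}{\tilde{\beta}}=1$, the integer $q$ in the $\alpha$-root string through $\tilde{\beta}$ is at least $1$, hence $\tilde{\beta}-\alpha\in\Phi$. Writing $\tilde{\beta}$ and $\alpha$ in the simple-root basis, the inequality $m_\gamma\ge n_\gamma$ shows that $\tilde{\beta}-\alpha$ has non-negative simple-root coefficients, and $\alpha\ne\tilde{\beta}$ prevents these from all vanishing; hence $\tilde{\beta}-\alpha\in\Phi^+$. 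Finally, linearity of $\la{\cdot}{\tilde{\beta}}$ in its first slot gives
\[
\la{\tilde{\beta}-\alpha}{\tilde{\beta}}=\la{\tilde{\beta}}{\tilde{\beta}}-\la{\alpha}{\tilde{\beta}}=2-1=1.
\]
The only step with any real content is the appeal to the root-string formula to certify that $\tilde{\beta}-\alpha$ is a genuine root rather than a formal difference; everything else is direct bookkeeping with the sign and magnitude constraints supplied by maximality, longness, and the coefficient inequality.
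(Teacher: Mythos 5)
Your proof is correct. It uses the same essential ingredients as the paper's proof (maximality of $\tilde\beta$, root strings, and Table~\ref{Root structure}), but the organization differs in two small ways that are worth flagging. First, the paper establishes $\alpha\in\Phi^+$ by noting directly that $\langle\alpha,\tilde\beta\rangle>0$ forces $\tilde\beta-\alpha\in\Phi$ (Humphreys, \S9.4), so if $\alpha$ were negative then $\tilde\beta-\alpha=\tilde\beta+(-\alpha)$ would exceed the highest root; you instead argue by contraposition with a root-string computation. These are the same underlying fact, just unpacked differently. Second, for $\tilde\beta-\alpha\in\Phi^+$ you invoke the coefficient inequality $m_\gamma\ge n_\gamma$, whereas the paper gets positivity for free by re-applying the first part of the lemma to $\tilde\beta-\alpha$ once $\langle\tilde\beta-\alpha,\tilde\beta\rangle=1>0$ is known. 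Your route is a little more explicit about why $\tilde\beta-\alpha$ is positive; the paper's is slightly more economical. Either is fine, and the heart of the argument --- constraining $\langle\alpha,\tilde\beta\rangle$ via Table~\ref{Root structure} using that $\tilde\beta$ is long, then applying linearity --- is identical.
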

\begin{proof}
 If $\langle\alpha,\tilde{\beta}\rangle>0$ then $\tilde{\beta}-\alpha\in\Phi$ (see~\cite[Lemma of $\S$9.4]{Humphreys}). This implication, along with the fact that $\tilde{\beta}$ is the highest root, implies $\alpha\in\Phi^+$.

Note that for any $\alpha\in\Phi^+$, $|\langle\alpha,\tilde{\beta}\rangle|\leq |\langle\tilde{\beta},\alpha\rangle|$. Assume $\alpha\neq \tilde{\beta}$. Then by applying Table~\ref{Root structure} and a simple calculation we deduce that $\langle\alpha,\tilde{\beta}\rangle \langle\tilde{\beta},\alpha\rangle\in\{1,2,3\}$. Hence, $\langle\alpha,\tilde{\beta}\rangle>0$ implies $\langle\alpha,\tilde{\beta}\rangle=1$. The last claim in the statement follows from linearity of $\langle\:,\:\rangle$ in the first component.
\end{proof}
 Let $\Sigma^+:=\{\alpha\in \Phi^+: \langle \alpha,\tilde{\beta}\rangle=1 \}$. Lemma~\ref{Lemma-rootproperty} allows us to define a fixed-point free involution of $\Sigma^+$ defined by $\alpha\mapsto \tilde{\beta}-\alpha$. We pick one element from each equivalence class. Therefore, we have the following disjoint decomposition:
\begin{equation}\label{Sigma-decomposition}
\Sigma^+=\{\alpha_i: 1\leq i\leq d\}\cup\{\tilde{\beta}-\alpha_i: 1\leq i\leq d\}.
\end{equation}
Hence, $|\Sigma^+|=2d$, where the value of the integer $d$ is explicitly calculated in Proposition 1.3 of~\cite{Gross-Wallach}. In particular,
Table \ref{dvalue} is given in~\cite{Gross-Wallach}:
\begin{table}[H]
\caption{Values of $d$}\label{dvalue}
\centering
\begin{tabular}{l c r}
\hline
   {$\g$}  & {} & {$d$}  \\
  \hline
   {${\sf A}_m$} & {$m\geq 1$}  & {$m-1$}  \\
   {${\sf B}_m$} & {$m\geq 2$}  & {$2m-3$}  \\
   {${\sf C}_m$} & {$m\geq 2$}  & {$m-1$}  \\
   {${\sf D}_m$} & {$m\geq 3$}  & {$2m-4$}  \\
   {${\sf G}_2$} & {} &  {2}  \\
   {${\sf F}_4$} & {} & {7}  \\
   {${\sf E}_6$} & {} & {10}  \\
   {${\sf E}_7$} & {} & {16}  \\
   {${\sf E}_8$} & {} & {28}
\end{tabular}
\end{table}
\begin{lemma}\label{u-2nilpotent}
The subalgebra $\hei$ is a two-step nilpotent Lie algebra with center $\g_{\tilde{\beta}}$.
\end{lemma}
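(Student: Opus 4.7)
The plan is to use the root space structure of $\mathfrak{u}$ together with the properties of the highest root $\tilde\beta$ established in Lemma~\ref{Lemma-rootproperty}. By definition and by the preceding lemma, we can write
$$\hei = \g_{\tilde\beta} \oplus \bigoplus_{\alpha\in\Sigma^+}\g_\alpha,$$
since the only root $\alpha$ with $\langle\alpha,\tilde\beta\rangle > 0$ that is not in $\Sigma^+$ is $\tilde\beta$ itself. The fundamental tool throughout will be the standard fact that $[\g_\alpha,\g_\gamma]\subseteq \g_{\alpha+\gamma}$, with the convention that $\g_{\alpha+\gamma}=0$ when $\alpha+\gamma$ is not a root.

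First I would prove the inclusion $[\hei,\hei]\subseteq \g_{\tilde\beta}$. Given two roots $\alpha,\gamma$ appearing in $\hei$, their $\langle\cdot,\tilde\beta\rangle$-values are each either $1$ or $2$, so $\langle\alpha+\gamma,\tilde\beta\rangle\geq 2$. If $\alpha+\gamma$ is a root, then by Lemma~\ref{Lemma-rootproperty} (applied to $\alpha+\gamma$, noting it must lie in $\Phi^+$ since it is a positive combination and has positive pairing with $\tilde\beta$) the only possibility is $\alpha+\gamma=\tilde\beta$. In the remaining case one of the summands already equals $\tilde\beta$ and the other is positive, in which case $\alpha+\gamma$ strictly exceeds the highest root and is not in $\Phi$. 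Next I would show that $\g_{\tilde\beta}$ lies in the center of $\hei$: for any root $\alpha$ appearing in $\hei$, the sum $\tilde\beta+\alpha$ is a strictly higher weight than $\tilde\beta$ and therefore not a root, so $[\g_{\tilde\beta},\g_\alpha]=0$. Combining these two facts yields $[\hei,[\hei,\hei]]\subseteq[\hei,\g_{\tilde\beta}]=0$, establishing two-step nilpotence and the inclusion $\g_{\tilde\beta}\subseteq Z(\hei)$.

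For the reverse inclusion $Z(\hei)\subseteq\g_{\tilde\beta}$, I would use the involution $\alpha\mapsto\tilde\beta-\alpha$ on $\Sigma^+$ from Lemma~\ref{Lemma-rootproperty}. For any $\alpha\in\Sigma^+$, the partner $\tilde\beta-\alpha$ is also in $\Sigma^+$, and the sum $\alpha+(\tilde\beta-\alpha)=\tilde\beta$ is a root. By the standard Chevalley structure theorem the bracket $[\g_\alpha,\g_{\tilde\beta-\alpha}]=\g_{\tilde\beta}$ is nonzero, so no nonzero element of $\g_\alpha$ can be central. Since $Z(\hei)$ must be a direct sum of root spaces (it is preserved by $\h\subseteq\mathfrak{l}$, which acts semisimply on $\hei$ with weight space decomposition given by the root spaces), this forces $Z(\hei)\subseteq\g_{\tilde\beta}$.

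I do not anticipate a serious obstacle here: the result is a direct consequence of Lemma~\ref{Lemma-rootproperty} together with elementary root space bracket relations. The only subtlety worth being careful about is the argument that $Z(\hei)$ is a sum of root spaces, which requires invoking $\h$-invariance of the center under the adjoint action; this is a routine application of the fact that centralizers of a semisimple operator are preserved by that operator.
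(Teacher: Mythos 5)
Your proof is correct and follows essentially the same approach as the paper: both use Lemma~\ref{Lemma-rootproperty} to establish $[\hei,\hei]\subseteq\g_{\tilde\beta}\subseteq Z(\hei)$, and both use the nonvanishing brackets $[\g_\alpha,\g_{\tilde\beta-\alpha}]=\g_{\tilde\beta}$ for the reverse containment $Z(\hei)\subseteq\g_{\tilde\beta}$. You spell out the $\h$-invariance argument showing $Z(\hei)$ is a sum of root spaces, a step the paper leaves implicit.
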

\begin{proof}
It follows from Lemma~\ref{Lemma-rootproperty} that $[\hei,\hei]\subseteq\g_{\tilde\beta}\subseteq \mathrm{Z}(\hei)$, which implies $\hei$ is a two-step nilpotent Lie algebra. For $\gamma_1,\gamma_2\in \Sigma^+$, notice that $[\g_{\gamma_1},\g_{\gamma_2}]=0$ unless $\gamma_2=\tilde{\beta}-\gamma_1$, and in this case we have $[\g_{\gamma_1},\g_{\tilde{\beta}-\gamma_1}]=\g_{\tilde{\beta}}$. Using these equalities, one can see that $\g_{\tilde{\beta}}=\mathrm{Z}(\hei)$.
\end{proof}
Notice that $\hei=\g_{\tilde{\beta}}\oplus\bigoplus_{\substack{\alpha\in\Sigma^+}}\g_{\alpha}$ is of dimension $2d+1$. Let us choose a $(d+1)$-dimensional maximal abelian subalgebra $\abel$ of $\hei$, defined to be
\begin{equation}\label{cons-a}
\abel=\g_{\tilde{\beta}}\oplus\bigoplus_{i=1}^d\g_{\alpha_i}.
\end{equation}
The maximality can be seen with the help of Lemma~\ref{Lemma-rootproperty}, specifically the fact that $\g_{\tilde{\beta}}=[\g_{\tilde{\beta}-\alpha_i},\g_{\alpha_i}]$. In Section~\ref{pa-sub-section}, we show that this subalgebra produces a polarizing subgroup of a Heisenberg subgroup.
\begin{lemma}\label{root=12} Let $\alpha\in \Phi$ be an arbitrary root. There exists a simple root  $\gamma\in \Delta$ such that $\langle \alpha,\gamma\rangle=\pm 1\, \text{or}\, \pm 2$.
\end{lemma}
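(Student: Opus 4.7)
The plan is to exploit Table~\ref{Root structure} to rule out the possibility that every Cartan integer $\la{\alpha}{\gamma}$ with $\gamma\in\Delta$ lies in the forbidden set $\{0,\pm 3\}$. The key point is that $0$ is ruled out by non-degeneracy of $(\,,\,)$ together with the fact that $\Delta$ spans $E$, while $\pm 3$ can only occur in type ${\sf G}_2$, which can be handled by direct inspection.

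First I would show that at least one Cartan integer is nonzero. Since $\Delta$ is a basis of $E$ and the inner product $(\,,\,)$ is non-degenerate, if $(\alpha,\gamma)=0$ for every $\gamma\in\Delta$, then $\alpha=0$, contradicting $\alpha\in\Phi$. Hence there exists $\gamma_0\in\Delta$ with $\la{\alpha}{\gamma_0}\neq 0$, and by Table~\ref{Root structure} this value belongs to $\{\pm 1,\pm 2,\pm 3\}$. If $|\la{\alpha}{\gamma_0}|\in\{1,2\}$, we are done.

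The remaining case is when every non-vanishing $\la{\alpha}{\gamma}$ (for $\gamma\in\Delta$) equals $\pm 3$. By Table~\ref{Root structure}, $|\la{\alpha}{\gamma}|=3$ forces $(\|\alpha\|/\|\gamma\|)^2=3$. Since $\g$ is simple, the root system $\Phi$ is irreducible, and the only irreducible root system containing two roots whose length ratio is $\sqrt{3}$ is ${\sf G}_2$. So I would reduce to $\Phi={\sf G}_2$, where the lemma follows by a direct check: fix simple roots $\alpha_1$ (short) and $\alpha_2$ (long) with $\la{\alpha_1}{\alpha_2}=-1$ and $\la{\alpha_2}{\alpha_1}=-3$, list the six positive roots
\[
\alpha_1,\;\alpha_2,\;\alpha_1+\alpha_2,\;2\alpha_1+\alpha_2,\;3\alpha_1+\alpha_2,\;3\alpha_1+2\alpha_2,
\]
and verify for each $\gamma$ on this list that at least one of $\la{\gamma}{\alpha_1}$ and $\la{\gamma}{\alpha_2}$ lies in $\{\pm 1,\pm 2\}$. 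The negative roots are covered by $\alpha\mapsto-\alpha$. The only genuine obstacle is this ${\sf G}_2$ verification, but with only six cases it is a routine finite check.
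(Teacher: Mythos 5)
Your proof is correct and takes essentially the same approach as the paper: invoke Table~\ref{Root structure} to pin the Cartan integers to $\{0,\pm 1,\pm 2,\pm 3\}$, observe that some $\la{\alpha}{\gamma}$ with $\gamma\in\Delta$ must be nonzero, and dispose of the $\pm 3$ case by reducing to ${\sf G}_2$ and checking directly. The only cosmetic difference is that the paper first splits off the trivial case $\alpha\in\pm\Delta$ (where one takes $\gamma=\pm\alpha$ and gets $\la{\alpha}{\gamma}=\pm 2$), which is worth doing since Table~\ref{Root structure} is stated only for independent roots $\alpha\neq\pm\gamma$; your appeal to the table should really be preceded by that one-line reduction.
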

\begin{proof}
 The statement is clear if $\alpha\in \pm\Delta$ (since we can set $\gamma=\pm \alpha$) and so we can assume that $\alpha\in\Phi\setminus \pm\Delta$. In this case
there exists a root $\gamma\in\Delta$ such that $\la{\alpha}{\gamma}\neq 0$. For root systems other than ${\sf G}_2$, the lemma follows from  Table~\ref{Root structure}. For 
${\sf G}_2$, the lemma can be verified by a direct examination of the roots.
\end{proof}
Set
\begin{equation}
F(\Phi):=\min\left\{\langle\tilde{\beta},\alpha\rangle> 0: \alpha\in\Phi\right\}.
\end{equation}
Obviously $F(\Phi)\leq 2$. For the root systems ${\sf A}_m, {\sf D}_m, {\sf E}_6, {\sf E}_7$ and ${\sf E}_8$ have only one root length and so a similar argument as above shows that $F(\Phi)=1$ unless $\Phi={\sf A}_1$ which in this case we have $F({\sf A}_1)=2$. For ${\sf B}_m, {\sf F}_4$, and ${\sf G}_2$, we observe that these root systems have non-perpendicular long roots and so for these root systems we also have $F(\Phi)=1$.

We show that $F({\sf C}_m)=2$. If $\langle \tilde{\beta},\alpha\rangle=1$ then $\alpha$ is a long root, but in ${\sf C}_m$ all non-proportional distinct long roots are perpendicular. Hence $F({\sf C}_m)=2$.  Therefore we have
\begin{equation}\label{F(Phi)}
F(\Phi)=\left\{
          \begin{array}{ll}
            1, & \Phi\neq {\sf A}_1;\ {\sf C}_m,\ m\geq 2\\
            2, &  \Phi= {\sf A}_1;\ {\sf C}_m,\ m\geq 2.
          \end{array}
        \right.
\end{equation}
\section{Heisenberg subgroups of adjoint Chevalley groups}\label{pa-sub-section}
In this section we review the construction of {\it elementary adjoint Chevalley groups} and we define Heisenberg subgroups of Chevalley groups which are 
obtained by exponentiating the nilpotent radical of the Heisenberg parabolic  subalgebra $\mathfrak{q}$ defined in the previous section. Moreover, we verify that the construction of Chevalley groups over fields given in~\cite{Carter,Steinberg} can be extended to elementary Chevalley groups defined over $\OO{n}$. One way to approach this is to use the language of group schemes~\cite{Borel-Group-Schme}. However, in this paper we consider the explicit construction of Chevalley groups using Chevalley bases. The theory of elementary Chevalley groups over rings has also been presented in detail in~\cite{Vavilov-Plotkin}.

As before $F$ is a non-Archimedean local field with the ring of integers $\mathcal{O}$, the prime ideal $\mathfrak{p}$ and the residue field $\FF{q}$, $q=p^l$. Here we assume that $p\geq 3$ and we set $R=\OO{n}$, $n\geq 1$. We will use the standard notation, which can be found in~\cite{Carter,Serre,Steinberg}. Let
$$
\{H_\alpha:\,\, \alpha\in\Delta\}\cup\{e_\alpha:\,\, \alpha\in \Phi\},
$$
be a {\it Chevalley basis}, with respect to our choice of base $\Delta$. Let $\g_\ZZ\subseteq \g$ be the free $\ZZ$-module generated by the Chevalley basis. One can show that $\g_\ZZ$ is indeed a Lie algebra over $\ZZ$. For any $\alpha\in\Phi$ and $\xi\in\mathbb{C}$, 
$\ad_{\xi e_\alpha^{}}=\xi\ad_{e_\alpha}$ is a nilpotent derivation of $\g$. Hence, the exponential map
$$x_\alpha(\xi):=\exp(\xi\ad_{e_\alpha}),$$
 is a Lie algebra automorphism of $\g$. Moreover, the entries of the matrix of $x_\alpha(\xi)$, with respect to the Chevalley basis, are of the form $a\xi^i$, where $a\in\ZZ$ and $i$ is a non-negative integer. Let us denote this matrix by $A_\alpha(\xi)$. Consider the $R$-Lie algebra $\g_{R}:=\g_\ZZ\otimes_{\ZZ}R$ with the Chevalley basis
$$
\{H_\alpha=H_\alpha\otimes 1 :\,\, \alpha\in\Delta\}\cup\{\,\, e_\alpha=e_\alpha\otimes 1 :\,\, \alpha\in \Phi\}.
$$
For every $t\in R$, we obtain a new matrix $\bar{A}_{\alpha}(t)$ from $A_\alpha(\xi)$, by replacing the entries $a\xi^i$ by $\bar{a}t^i$, where $\bar{a}$ is $a$ reduced modulo $\mathfrak{p}^n$. The linear transformation $\bar{x}_\alpha(t)$ associated with the matrix $\bar{A}_{\alpha}(t)$ is a Lie algebra automorphism of $\g_R$. The subgroup of the automorphism group of
$\g_R$, generated by transformations $\bar{x}_\alpha(t)$ for each $\alpha\in\Phi$ and $t\in R$, is called the {\it elementary adjoint Chevalley group}. We denote it by $\G\left(R\right):=\G\left(R,\Phi\right)$. Let $\alpha\in \Phi$ be an arbitrary root. The {\it one-parameter subgroup} $X_\alpha$ of $\G\left(R\right)$ is defined by
$$X_\alpha=\left\langle \bar{x}_\alpha(t): t\in R\right\rangle.$$
\begin{lemma}\label{Xbeta}
The subgroup $X_{\alpha}$ is isomorphic to the additive group of $R$.
\end{lemma}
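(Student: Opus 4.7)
The plan is to exhibit an explicit group isomorphism $\phi\colon (R,+) \to X_\alpha$ given by $t\mapsto \bar{x}_\alpha(t)$, and to prove that it is both a homomorphism (this forces the set $\{\bar{x}_\alpha(t):t\in R\}$ to already be a subgroup, equal to $X_\alpha$) and injective.

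For the homomorphism property, I would lift the identity back to characteristic zero. Since $\ad_{e_\alpha}$ is a nilpotent derivation of $\g$, the series for $\exp(\xi\ad_{e_\alpha})\exp(\eta\ad_{e_\alpha})$ terminates, and the classical identity $\exp(\xi\ad_{e_\alpha})\exp(\eta\ad_{e_\alpha}) = \exp((\xi+\eta)\ad_{e_\alpha})$ holds in $\mathrm{End}(\g)$. Expressed in the Chevalley basis, this becomes an identity $A_\alpha(\xi)A_\alpha(\eta) = A_\alpha(\xi+\eta)$ between matrices whose entries lie in $\mathbb{Z}[\xi,\eta]$, because each entry of $A_\alpha(\xi)$ has the form $a\xi^i$ with $a\in\mathbb{Z}$ (this integrality is precisely the defining feature of the Chevalley basis). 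Reducing this polynomial identity modulo $\mathfrak{p}^n$ and specializing $\xi\mapsto s$, $\eta\mapsto t$ for arbitrary $s,t\in R$ yields $\bar{A}_\alpha(s)\bar{A}_\alpha(t) = \bar{A}_\alpha(s+t)$, hence $\bar{x}_\alpha(s)\bar{x}_\alpha(t) = \bar{x}_\alpha(s+t)$ in $\G(R)$. In particular $\bar{x}_\alpha(0) = \mathbf 1$ and $\bar{x}_\alpha(-t) = \bar{x}_\alpha(t)^{-1}$, so $\{\bar{x}_\alpha(t):t\in R\}$ is already closed under products and inverses, proving $X_\alpha = \{\bar{x}_\alpha(t):t\in R\}$ and that $\phi$ is a surjective group homomorphism.

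For injectivity I would test $\bar{x}_\alpha(t)$ on the Cartan element $H_\alpha\in\g_R$. Since $\ad_{e_\alpha}(H_\alpha) = -[H_\alpha,e_\alpha] = -2e_\alpha$ and $\ad_{e_\alpha}^2(H_\alpha) = -2\ad_{e_\alpha}(e_\alpha) = 0$, the series defining $\exp(\xi\ad_{e_\alpha})$ truncates to give $x_\alpha(\xi)(H_\alpha) = H_\alpha - 2\xi\, e_\alpha$ over $\mathbb{Z}$. Reducing modulo $\mathfrak{p}^n$, $\bar{x}_\alpha(t)(H_\alpha) = H_\alpha - 2t\, e_\alpha$. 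If $\bar{x}_\alpha(t) = \mathbf 1$, then $2t = 0$ in $R = \mathcal O/\mathfrak p^n$; since we are assuming $p\geq 3$, the element $2$ is a unit in $R$, and we conclude $t=0$. Hence $\phi$ is injective and therefore an isomorphism.

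The one point that deserves care, and where the argument could break, is the passage from the characteristic-zero exponential identity to a polynomial identity with $\mathbb{Z}$-coefficients. This requires the Chevalley-basis fact that the entries of $A_\alpha(\xi)$ lie in $\mathbb{Z}[\xi]$ rather than merely $\mathbb{Q}[\xi]$; once that is in hand, the specialization to $R$ is formal. The injectivity step is where the hypothesis $p\geq 3$ enters (otherwise $2$ is a zero divisor in $R$ and a nonzero element of the form $t = \varpi^{n-1}\cdot u$ could lie in the kernel), which is consistent with the paper's standing restriction on the residue characteristic.
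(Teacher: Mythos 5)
Your proof is correct and takes essentially the same approach as the paper: the map $t\mapsto\bar{x}_\alpha(t)$ is the desired isomorphism, its homomorphism property follows by reducing the integral exponential identity modulo $\mathfrak p^n$, and injectivity is checked by looking at the action on a Cartan element. The one small variation is in the injectivity step: you evaluate on $H_\alpha$ (so the coefficient is always $\langle\alpha,\alpha\rangle=2$), whereas the paper applies Lemma~\ref{root=12} to pick a simple root $\gamma$ with $\langle\alpha,\gamma\rangle\in\{\pm1,\pm2\}$ and evaluates on $H_\gamma$; your variant is a mild streamlining that bypasses Lemma~\ref{root=12}, but both require the same hypothesis $p\geq3$ and are equivalent in substance.
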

\begin{proof}
The map $t\rightarrow {\bar{x}_\alpha(t)}$ gives the desired group isomorphism. Note that the injectivity can be seen through the action of $\bar{x}_\alpha(t)$ on the Chevalley basis for the Lie algebra $\g_R$. More precisely, we have (see~\cite{Carter}, $\S$4.4) $\bar{x}_\alpha(t)H_{\gamma}=H_{\gamma}-\langle\alpha,\gamma\rangle te_\alpha$. By  Lemma~\ref{root=12}, if $\bar{x}_\alpha(t)={\bf 1}$, then $t=0$ since $p\geq 3$.
\end{proof}
Let us define the {\it Heisenberg subgroup} $U$ of $\G\left(R\right)$
\begin{equation}\label{Heis-Par}
U=\left\langle\bar{x}_\alpha(t): \la{\alpha}{\tilde{\beta}}\geq 1,\, t\in R\right\rangle.
\end{equation}
 Here the right hand side of~\eqref{Heis-Par} is
 the subgroup of $\mathbf G_{ad}(R)$ generated by the given elements $\bar{x}_\alpha(t)$.
This subgroup is analogous to the nilpotent radical of the Heisenberg parabolic subalgebra. This analogy will be apparent in Proposition~\ref{Prop-3}. From now on, we fix a total ordering $\prec$ of $\Phi$ which is compatible with the height function $\hh$, i.e. $\alpha\prec \beta$ implies $\hh(\alpha)\leq \hh(\beta)$. We recall a theorem due to Chevalley (the proof over $R$ is similar to~\cite[Theorem 5.2.2]{Carter}) that expresses the commutator of two generators of $\G(R)$ as a product of generators. Let $\alpha, \beta\in\Phi$ such that $\alpha\neq\pm\beta$,  and let $t_1,t_2$ be elements of $R$. Let us define the commutator  $[\bar{x}_\alpha(t_2),\bar{x}_\beta(t_1)]:=\bar{x}_\alpha(t_2)^{-1}\bar{x}_\beta(t_1)^{-1}\bar{x}_\alpha(t_2)\bar{x}_\beta(t_1)$.
The Chevalley commutator formula states that
\begin{equation}\label{Che-comm}
[\bar{x}_\alpha(t_2),\bar{x}_\beta(t_1)]=\prod_{i,j>0}\bar{x}_{i\beta+j\alpha}\left(C_{i,j,\beta,\alpha}(-t_1)^it_2^j\right),
\end{equation}
where the product is taken over all pairs of positive integers $i,j$ for which $i\beta+j\alpha$ is a root, and the terms of the product are in increasing order of $i+j$. The constants $C_{i,j,\beta,\alpha}$ are in the set $\{\pm 1,\pm 2,\pm 3\}$.

Next, we point out that  every element of $U$ can be expressed uniquely in the form
\begin{equation}\label{uniq}
\prod_{\langle\alpha,\tilde{\beta}\rangle\geq 1}\bar{x}_{\alpha}(t_\alpha),
\end{equation}
where the product is taken over positive roots $\alpha$, increasing in the chosen total ordering. Indeed, given an element of $U$ in the form of a product of $\bar{x}_\alpha(t)$'s, the desired order can be achieved by performing a rearrangement as follows: if there is  a pair of consecutive terms $\bar{x}_{\alpha}(t_\alpha)\bar{x}_{\beta}(t_\beta)$ with $\beta\prec \alpha$, we swap them by use of~\eqref{Che-comm}:
\begin{equation}\label{Che-com1}
\bar{x}_{\alpha}(t_\alpha)\bar{x}_{\beta}(t_\beta)=\bar{x}_{\beta}(t_\beta)\bar{x}_{\alpha}(t_\alpha)\prod_{i,j>0}\bar{x}_{i\beta+j\alpha}\left(C_{i,j,\beta,\alpha}(-t_\beta)^it_\alpha^j\right).
\end{equation}
In this fashion, $\bar{x}_{\beta}(t_\beta)\bar{x}_{\alpha}(t_\alpha)$ is in the increasing order, and all the extra terms introduced by use of the commutator formula are in the desired order because the total ordering $\prec$ is compatible with the height function. This rearrangement terminates after finitely many iterations. The uniqueness of such an expression of elements in $U$ is proved by an argument similar to the proof of~\cite[Theorem 5.3.3(ii)]{Carter}. The following lemma can be proved easily.

\begin{lemma}\label{p=3,lemma} Let $\Phi$ be a root system different from ${\sf G}_2$. Then for any $\alpha_i$, chosen from the decomposition~\eqref{Sigma-decomposition} and $t\in R$, we have
$$
\bar{x}_{\alpha_i}(1)\bar{x}_{\tilde{\beta}-\alpha_i}(t)=\bar{x}_{\tilde{\beta}-\alpha_i}(t)\bar{x}_{\alpha_i}(1)\bar{x}_{\tilde{\beta}}(Ct),
$$
where $C\in \{\pm 1,\pm 2\}$.
\end{lemma}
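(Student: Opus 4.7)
The plan is to apply the Chevalley commutator formula~\eqref{Che-com1} to the pair $\alpha=\alpha_i$, $\beta=\tilde\beta-\alpha_i$, and to argue that the product on the right collapses to a single factor. First, I would observe that since $\tilde\beta=\alpha+\beta$, any potential contributing root $i\beta+j\alpha$ with $i,j\geq 1$ has height
\[
\hh(i\beta+j\alpha)=i\,\hh(\beta)+j\,\hh(\alpha)\geq\hh(\alpha)+\hh(\beta)=\hh(\tilde\beta),
\]
with equality only when $(i,j)=(1,1)$. Since $\tilde\beta$ is the highest root and both $\hh(\alpha)$ and $\hh(\beta)$ are positive, no root has height strictly greater than $\hh(\tilde\beta)$. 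Therefore the only pair $(i,j)$ for which $i\beta+j\alpha\in\Phi$ is $(1,1)$, contributing the root $\tilde\beta$. Substituting into~\eqref{Che-com1} yields the desired identity with $C=-C_{1,1,\beta,\alpha}$, and this already shows that $\bar x_{\tilde\beta}(Ct)$ appears on the right in the stated position (all extra commutator terms that a rearrangement would produce have been killed by the height argument).

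The second step is to pin down the value of $C$. By the classical description of the leading structure constants in a Chevalley basis, one has $|C_{1,1,\beta,\alpha}|=r+1$, where $r$ is the largest non-negative integer for which $\beta-r\alpha\in\Phi$. Since a priori $C_{1,1,\beta,\alpha}\in\{\pm 1,\pm 2,\pm 3\}$, the task reduces to ruling out $r\geq 2$, i.e.\ to showing that $\beta-2\alpha=\tilde\beta-3\alpha_i$ is not a root whenever $\Phi\neq{\sf G}_2$.

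This last step is the crux of the argument and the place where the hypothesis $\Phi\neq{\sf G}_2$ is really used. I would proceed by contradiction: if $\tilde\beta-3\alpha_i$ were a root, then because $\alpha_i$-root strings contain no gaps and both $\tilde\beta$ and $\tilde\beta-\alpha_i=\beta$ are already roots, the four vectors
\[
\tilde\beta,\ \tilde\beta-\alpha_i,\ \tilde\beta-2\alpha_i,\ \tilde\beta-3\alpha_i
\]
would form an $\alpha_i$-string of length four. However, among irreducible root systems only ${\sf G}_2$ admits root strings of length four; all other types have strings of length at most three. This contradicts $\Phi\neq{\sf G}_2$, so $r\leq 1$ and hence $C\in\{\pm 1,\pm 2\}$. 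The main obstacle is thus packaged into this single appeal to the classical bound on root string lengths; as an alternative, one could verify $r\leq 1$ by a short case check on each type ${\sf A}_m$--${\sf F}_4$ using the explicit decomposition~\eqref{Sigma-decomposition}, which would make the proof self-contained but longer.
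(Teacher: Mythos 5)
Your proof is correct and follows essentially the same approach as the paper: both reduce to the Chevalley commutator formula with the single contributing pair $(i,j)=(1,1)$, invoke Carter's identity $|C_{1,1,\beta,\alpha}|=r+1$, and bound $r\leq 1$ for $\Phi\neq{\sf G}_2$. The only cosmetic difference is in the last step, where you rule out $r\geq 2$ via the classical bound that only ${\sf G}_2$ has root strings of length four, while the paper instead computes $r+1=\langle\tilde\beta,\alpha_i\rangle$ directly from $r-s=\langle\tilde\beta-\alpha_i,\alpha_i\rangle$ and $s=1$ and then appeals to Table~\ref{Root structure}; these are equivalent facts about root strings.
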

\begin{proof}
From~\eqref{Che-com1} we have
$$
\bar{x}_{\alpha_i}(1)\bar{x}_{\tilde{\beta}-\alpha_i}(t)=\bar{x}_{\tilde{\beta}-\alpha_i}(t)\bar{x}_{\alpha_i}(1)\bar{x}_{\tilde{\beta}}(-C_{1,1,\tilde{\beta}-\alpha_i,\alpha_i}t).
$$
But (see~\cite{Carter}, Theorem 5.2.2) $C_{1,1,\tilde{\beta}-\alpha_i,\alpha_i}=\pm(r+1)$,
where
$$
(\tilde{\beta}-\alpha_i)-r\alpha_i,\cdots,(\tilde{\beta}-\alpha_i),\cdots,(\tilde{\beta}-\alpha_i)+s\alpha_i,
$$
is the $\alpha_i$-chain through $(\tilde{\beta}-\alpha_i)$. Since $\tilde{\beta}$ is the highest root, $(\tilde{\beta}-\alpha_i)+s\alpha_i$ is not a root for $s>1$, and therefore $s=1$. Also, it is known that $\langle\tilde{\beta}-\alpha_i,\alpha_i\rangle=r-s$. It follows that
$$
r+1=\langle\tilde{\beta},\alpha_i\rangle.
$$
Notice that $\langle\tilde{\beta},\alpha_i\rangle\in\{1,2\}$, since otherwise $\|\tilde{\beta}\|/\|\alpha_i\|=3$, which is impossible when the root system is different from ${\it G}_2$.
\end{proof}

\begin{lemma}\label{Lemma-Hadi} Let $G$ be a finitely generated group generated  by $g_i$, $1\leq i\leq n$. Let $A\unlhd G$ and assume that $[g_i,g_j]\in A$ for any $1\leq i,j\leq n$. Then $[G,G]\subseteq A$.
\end{lemma}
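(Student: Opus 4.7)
The plan is to pass to the quotient $G/A$, which is well-defined because $A$ is normal in $G$. Let $\pi : G \to G/A$ be the canonical projection, and write $\bar{g}_i := \pi(g_i)$. Since $\pi$ is a surjective homomorphism, the elements $\bar{g}_1, \dots, \bar{g}_n$ generate $G/A$. The hypothesis $[g_i, g_j] \in A$ translates to $[\bar{g}_i, \bar{g}_j] = \mathbf 1$ in $G/A$, so the generators of $G/A$ pairwise commute.

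The next step is to deduce that $G/A$ is abelian. This follows from a standard observation: in any group, the centralizer $C_H(x)$ of an element $x$ is a subgroup, so if $x$ commutes with a set of elements, it commutes with the subgroup they generate. Applying this to $\bar{g}_1$, which commutes with all $\bar{g}_j$, we see $\bar{g}_1$ lies in the center of $G/A$. Iterating (or arguing symmetrically for each $\bar{g}_i$), every $\bar{g}_i$ is central in $G/A$, and since they generate $G/A$, the whole group $G/A$ is abelian.

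Once $G/A$ is abelian, any commutator $[x,y]$ with $x,y\in G$ satisfies $\pi([x,y]) = [\pi(x),\pi(y)] = \mathbf 1$, hence $[x,y] \in \ker \pi = A$. Therefore $[G,G] \subseteq A$, completing the proof.

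There is really no serious obstacle here; the only point that requires a moment of care is the passage from \emph{pairwise commuting generators} to \emph{abelian group}, which is the elementary centralizer argument indicated above. Everything else is formal manipulation of the quotient.
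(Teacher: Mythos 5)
Your proof is correct and complete. The paper states this lemma without giving a proof, so there is no paper argument to compare against; your route (pass to the quotient $G/A$, observe that the pairwise-commuting images $\bar g_i$ generate $G/A$, use the centralizer-is-a-subgroup observation to conclude that each $\bar g_i$ is central and hence $G/A$ is abelian, then pull back to get $[G,G]\subseteq\ker\pi=A$) is exactly the standard argument one would expect the authors to have had in mind.
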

\begin{proposition}\label{Prop-3}
Let $p\geq 3$ if $G$ is not of type ${\sf G}_2$ and $p\geq 5$ otherwise. Then the group $U$ is two step nilpotent and  $[U,U]=X_{\tilde{\beta}}$.
\end{proposition}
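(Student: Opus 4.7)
The plan is to establish the two claims separately. First, I show that $X_{\tilde\beta}$ lies in the center of $U$. Since $\tilde\beta$ is the highest root, for any $\alpha \in \Phi^+$ and any positive integers $i, j$ the element $i\tilde\beta + j\alpha$ has height strictly greater than $\hh(\tilde\beta)$ and so is not a root. Substituting into the Chevalley commutator formula~\eqref{Che-comm}, one obtains $[\bar{x}_{\tilde\beta}(s), \bar{x}_\alpha(t)] = \1$ for every generator $\bar{x}_\alpha(t)$ of $U$ (by Lemma~\ref{Lemma-rootproperty}, $\alpha \in \Sigma^+ \cup \{\tilde\beta\}$). Hence $X_{\tilde\beta} \subseteq Z(U)$.

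Next, I show that $[U, U] \subseteq X_{\tilde\beta}$. Consider a pair of generators $\bar{x}_\alpha(s)$ and $\bar{x}_\beta(t)$ with $\alpha, \beta \in \Sigma^+$ and $\alpha \neq \beta$. If $\gamma = i\beta + j\alpha$ is a root with $i, j \geq 1$, then $\gamma \in \Phi^+$ and $\langle \gamma, \tilde\beta\rangle = i + j \geq 2$; by Lemma~\ref{Lemma-rootproperty} this forces $\gamma = \tilde\beta$, whence $i = j = 1$ and $\alpha + \beta = \tilde\beta$. Thus every term on the right hand side of~\eqref{Che-comm} is of the form $\bar{x}_{\tilde\beta}(\cdot)$, so $[\bar{x}_\alpha(s), \bar{x}_\beta(t)] \in X_{\tilde\beta}$. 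Together with the first step, every commutator of a pair of generators of $U$ lies in $X_{\tilde\beta}$, which is normal since it is central. Lemma~\ref{Lemma-Hadi} then yields $[U, U] \subseteq X_{\tilde\beta}$, proving that $U$ is two-step nilpotent.

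For the reverse inclusion, fix any $\alpha_i$ from~\eqref{Sigma-decomposition}. By Lemma~\ref{p=3,lemma} when $\Phi \neq {\sf G}_2$, and by the same commutator computation (now allowing the additional case $\langle\tilde\beta, \alpha_i\rangle = 3$) when $\Phi = {\sf G}_2$, one has
\[
[\bar{x}_{\alpha_i}(1), \bar{x}_{\tilde\beta - \alpha_i}(t)] = \bar{x}_{\tilde\beta}(Ct),
\]
for some nonzero integer $C$ lying in $\{\pm 1, \pm 2\}$ in general and in $\{\pm 1, \pm 2, \pm 3\}$ for ${\sf G}_2$. Under the hypothesis $p \geq 3$ (respectively $p \geq 5$ for ${\sf G}_2$), the integer $C$ is a unit in $R = \OO{n}$, so as $t$ varies over $R$ the right hand side exhausts $X_{\tilde\beta}$; hence $X_{\tilde\beta} \subseteq [U, U]$.

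The main obstacle is the exceptional behavior of ${\sf G}_2$: for a short root $\alpha_i \in \Sigma^+$ one has $\langle \tilde\beta, \alpha_i\rangle = 3$, so the commutator constant $C$ can be $\pm 3$, which fails to be a unit in $R$ precisely when $p = 3$. This is exactly why the stronger hypothesis $p \geq 5$ is imposed in the ${\sf G}_2$ case, while $p \geq 3$ is sufficient for the other root systems.
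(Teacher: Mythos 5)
Your proof is correct and follows essentially the same route as the paper: the Chevalley commutator formula and Lemma~\ref{Lemma-rootproperty} to show commutators of generators land in $X_{\tilde{\beta}}$, Lemma~\ref{Lemma-Hadi} to conclude $[U,U]\subseteq X_{\tilde{\beta}}$, Lemma~\ref{p=3,lemma} (extended to ${\sf G}_2$) for the reverse inclusion, and the highest-root observation for $X_{\tilde{\beta}}\subseteq Z(U)$. The only differences are cosmetic: you establish centrality of $X_{\tilde{\beta}}$ first rather than last, you spell out why $i\beta+j\alpha$ can only be $\tilde\beta$, and you note explicitly that $X_{\tilde{\beta}}$ is normal (a hypothesis of Lemma~\ref{Lemma-Hadi} that the paper leaves tacit).
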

\begin{proof}
With the help of~\eqref{Che-comm} and Lemma~\ref{Lemma-rootproperty}, one can see that the commutators of the generators of $U$ are in $X_{\tilde\beta}$. Hence by applying Lemma~\ref{Lemma-Hadi}  we conclude that the commutator subgroup of $U$ is contained in $X_{\tilde{\beta}}$. Conversely,
by Lemma \ref{p=3,lemma},
 any element in $X_{\tilde{\beta}}$ can be obtained from commuting suitable elements of $X_{\alpha_i}$ and $X_{\tilde{\beta}-\alpha_i}$, for $1\leq i\leq d$. Hence, $[U,U]=X_{\tilde{\beta}}$. On the other hand, the fact that $\tilde\beta$ is the highest root implies that for every $\alpha$ satisfying $\langle\alpha,\tilde\beta\rangle>0$, we have $i\tilde\beta+j\alpha\not\in\Phi$ for all $i,j>0$. Hence, by~\eqref{Che-comm} we have $[X_\alpha,X_{\tilde\beta}]=\bold{1}$ which implies that $X_{\tilde{\beta}}\subseteq\CG(U)$ and hence $U$ is a two step nilpotent subgroup.
\end{proof}

We now  recall that by a theorem of Chevalley (whose proof over $R$ is similar to~\cite[Theorem 6.3.1]{Carter}),
for any root $\alpha$ there exists a surjective homomorphism
\begin{equation}
\phi_\alpha: \SL_2\left(R\right) \longrightarrow \langle X_\alpha,X_{-\alpha}\rangle,
\end{equation}
such that
$$
\phi_\alpha\begin{pmatrix}
1 & t\\
0 & 1
\end{pmatrix}=\bar{x}_\alpha(t),\qquad
\phi_\alpha\begin{pmatrix}
1 & 0\\
t & 1
\end{pmatrix}=\bar{x}_{-\alpha}(t).
$$
For any invertible element $\lambda\in R$, we denote
\begin{equation}\label{hlambda}
h_\alpha(\lambda):=\phi_\alpha \begin{pmatrix}
\lambda & 0\\
0 & \lambda^{-1}
\end{pmatrix}.
\end{equation} 
Let $\alpha,\beta\in\Phi$ be any roots, then one can show that (see~\cite[Chapter 7]{Carter})
\begin{equation}\label{action-h}
h_\alpha(\lambda)\bar{x}_{\beta}(t)h_\alpha(\lambda)^{-1}=\bar{x}_{\beta}\left(\lambda^{\la{\beta}{\alpha}}t\right).
\end{equation}
\section{Faithful representations and generic characters}

 Let $(\rho, V)$ be a faithful representation of $\G\left(R\right)$ where $R=\OO{n}$. 
Let $\sigma:=\res{\rho}{U}$, be the restriction of $\rho$ to the Heisenberg subgroup $U$, defined in~\eqref{Heis-Par}, and let $(\sigma_i,V_i)$, $1\leq i\leq k$, be the irreducible factors in the decomposition of the $U$-representation $(\sigma,V)$. Then by Schur's lemma for any $z\in\CG(U)$ and $v\in V_i$ we have $\sigma_i(z)v=\chi_i(z)v$, where $\chi_i$ is a one-dimensional representation of $Z(U)$. By Lemma~\ref{Character-local} for each $1\leq i\leq k$ there exists $\bar{b}_i=b_i+\mathfrak{p}^{-\ell}\in\mathfrak{p}^{-(n+\ell)}/\mathfrak{p}^{-\ell}$ such that for any $s\in \mathcal{O}$, 
\begin{equation}\label{char-XB}
\chi_i\left(\bx{\tilde{\beta}}(s+\mathfrak{p}^n)\right)=\psi(\Tr(b_is)).
\end{equation}
 With this observation we prove the following proposition. As before the characteristic of the residue field $\mathcal{O}/\mathfrak{p}$ is $p$. 
 
\begin{proposition}\label{generic-prop} Let $p\geq 3$ when $\Phi\neq {\sf G}_2$ and $p\geq 5$ when $\Phi={\sf G}_2$. Let $\chi_i$, $1\leq i\leq k$ be defined as above, and let $b_i\in \mathfrak{p}^{-(n+\ell)}$ correspond to $\chi_i$ by 
Lemma 
\ref{Character-local}.
Then  
$\nu(b_i)=-(n+\ell)$
for some $1\leq i\leq k$.
In particular, $\chi_i$ 
is a generic character of $Z(U)$. 
\end{proposition}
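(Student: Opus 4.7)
The plan is to argue by contradiction, exploiting the faithfulness of $\rho$ to force some $b_i$ to attain minimal valuation $-(n+\ell)$, and then check separately that this analytic condition on $\chi_i$ is exactly the non-degeneracy condition defining a generic character. The key geometric input is that the center $Z(U)=X_{\tilde\beta}$ is, as a group, isomorphic to $(R,+)$ by Lemma \ref{Xbeta}, so characters of $Z(U)$ are parametrized by $\mathfrak{p}^{-(n+\ell)}/\mathfrak{p}^{-\ell}$ via Lemma \ref{Character-local}, and the ``primitive'' central characters correspond to representatives $b$ with $\nu(b)=-(n+\ell)$.

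Suppose toward a contradiction that $\nu(b_i)\geq -(n+\ell)+1$ for every $1\leq i\leq k$. Consider the element $z_0:=\bx{\tilde\beta}(\varpi^{n-1}+\mathfrak{p}^n)\in Z(U)$. By Lemma \ref{Xbeta}, $z_0\neq \1$ in $\G(R)$ because $\varpi^{n-1}+\mathfrak p^n$ is non-zero in $R$. On the other hand, for every $i$ one has $b_i\varpi^{n-1}\in\mathfrak p^{-\ell}=\mathcal O^*$, so by definition of the complementary module $\Tr(b_i\varpi^{n-1})\in\mathbb Z_p$, and therefore $\chi_i(z_0)=\psi(\Tr(b_i\varpi^{n-1}))=1$. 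Consequently $\sigma_i(z_0)=\mathrm{Id}_{V_i}$ for every $i$, so $\rho(z_0)=\mathrm{Id}_V$, contradicting the faithfulness of $\rho$. Hence some $i$ must satisfy $\nu(b_i)=-(n+\ell)$; fix such an~$i$.

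To show that this $\chi_i$ is a generic character, I will verify that for every $x\in U\setminus Z(U)$ there exists $y\in U$ with $\chi_i([x,y])\neq 1$. Using the unique factorization \eqref{uniq}, reduce $x$ modulo $Z(U)$ to $\prod_{j=1}^d\bx{\alpha_j}(s_j)\bx{\tilde\beta-\alpha_j}(t_j)$, so some $s_j$ or $t_j$ is non-zero. Since $U$ is two-step nilpotent by Proposition \ref{Prop-3}, the map $y\mapsto [x,y]$ is additive modulo $Z(U)$, so it suffices to compute each factor commutator. For the candidate $y=\bx{\tilde\beta-\alpha_j}(t)$, the Chevalley formula \eqref{Che-comm} combined with Lemma \ref{Lemma-rootproperty} shows that a term $\bx{a\beta'+b\alpha'}$ can contribute only when $a\beta'+b\alpha'\in\Phi^+$ with $\langle a\beta'+b\alpha',\tilde\beta\rangle\in\{1,2\}$; a direct inspection of the relevant sums $\alpha_i+(\tilde\beta-\alpha_j)$ and $(\tilde\beta-\alpha_i)+(\tilde\beta-\alpha_j)$ (which have $\langle\cdot,\tilde\beta\rangle=2$ and hence must equal $\tilde\beta$) rules out all cross-terms except $i=j$ in the first kind, yielding $[x,\bx{\tilde\beta-\alpha_j}(t)]=\bx{\tilde\beta}(C_js_jt)$ for $C_j\in\{\pm 1,\pm 2\}$ as in Lemma \ref{p=3,lemma}. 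Since $p\geq 3$, $C_j$ is a unit in $R$, so $\nu(b_iC_js_j)=-(n+\ell)+\nu(s_j)\leq -\ell-1$, which by the proof of Lemma \ref{Character-local} means $t\mapsto \psi(\Tr(b_iC_js_jt))$ is a non-trivial character of $R$; choosing $t$ to make it non-trivial gives $\chi_i([x,y])\neq 1$. The case in which some $t_j\neq 0$ is handled symmetrically with $y=\bx{\alpha_j}(s)$.

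The main obstacle is the commutator analysis in the third paragraph: one must verify that no unwanted roots appear on the right-hand side of the Chevalley commutator formula so that the contribution of $x$ to $\chi_i([x,\bx{\tilde\beta-\alpha_j}(t)])$ collapses to the single clean term involving $s_j$. Everything else is either faithfulness (paragraph two) or the translation between valuation and non-triviality of additive characters (Lemma \ref{Character-local}).
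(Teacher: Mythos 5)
Your proof follows the same two-part strategy as the paper: a contradiction using $z_0=\bx{\tilde\beta}(\varpi^{n-1}+\mathfrak p^n)$ and faithfulness to force $\nu(b_i)=-(n+\ell)$ for some $i$, and then the commutator computation via the unique factorization~\eqref{uniq} and the Chevalley formula~\eqref{Che-comm} to collapse $[x,\bx{\tilde\beta-\alpha_j}(t)]$ to a single central term $\bx{\tilde\beta}(C_js_jt)$. This is the paper's argument.

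One small gap: by invoking Lemma~\ref{p=3,lemma} you restrict to $C_j\in\{\pm1,\pm2\}$ and conclude ``since $p\geq 3$, $C_j$ is a unit.'' But Lemma~\ref{p=3,lemma} is stated only for $\Phi\neq{\sf G}_2$; for ${\sf G}_2$ the structure constant can be $\pm3$, and this is precisely why the proposition carries the extra hypothesis $p\geq 5$ in that case. Your proof as written silently excludes ${\sf G}_2$; to cover it, you must allow $C_j\in\{\pm1,\pm2,\pm3\}$ and observe that $p\geq 5$ keeps $C_j$ a unit, as the paper does.
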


\begin{proof} 
Suppose that for  each $1\leq i\leq k$ we have $\varpi^{n-1} b_i\in\mathfrak{p}^{-\ell}$. Then $\chi_i(\bx{\tilde{\beta}}(\varpi^{n-1}+\mathfrak{p}^n))=1$. This in particular implies that $\rho(\bx{\tilde{\beta}}(\varpi^{n-1}+\mathfrak{p}^n))=\1$ which is a contradiction since $\rho$ is assumed to be a faithful representation. This proves the existence of $1\leq i\leq k$ such that $\nu(b_i)=-(n+\ell)$.

Next we prove that if $\nu(b_i)=-(n+\ell)$ then $\chi_i$ is a generic character of $Z(U)$. For $u\in U$ let $\chi_i([u,y])=1$ for all $y\in U$. By~\eqref{uniq},
\begin{equation}\label{uu}
u=\prod_{\langle\alpha,\tilde{\beta}\rangle\geq 1}\bar{x}_{\alpha}(s_\alpha+\mathfrak{p}^n)\qquad s_\alpha\in \mathcal{O},
\end{equation}
where the product is taken over positive roots $\alpha$, increasing in the chosen total ordering. We will show that the only term that contributes to~\eqref{uu} is the term that belongs to $X_{\tilde{\beta}}$. It follows that $u\in Z(U)$. Note that, for any $x\in U$, the map $y\mapsto [x,y]$ is a group homomorphism, since $U$ is a two step nilpotent group. 

We remark that for $\alpha,\beta\in \Sigma^+$ we have $[X_\alpha,X_\beta]=\bold{1}$ unless $\beta=\tilde{\beta}-\alpha$. For any $\alpha\neq \tilde{\beta}$ in~\eqref{uu} and arbitrary $s\in \mathcal{O}$, from the Chevalley commutator formula~\eqref{Che-comm} we have 
$$
[u,\bx{\tilde{\beta}-\alpha}(s+\mathfrak{p}^n)]=\bx{\tilde{\beta}}(Cs_\alpha s+\mathfrak{p}^n),
$$ 
where by Lemma~\ref{p=3,lemma}, $C\in \{\pm 1,\pm 2\}$ if $\Phi$ is different from ${\sf G}_2$ and $C\in \{\pm 1,\pm 2,\pm 3\}$ when $\Phi={\sf G}_2$. Since for any $s\in \mathcal{O}$ we have 
\begin{equation}\label{Cb_1s}
1=\chi_i^{}\left([u,\bx{\tilde{\beta}-\alpha}(s+\mathfrak{p}^n)]\right)=\chi_i^{}\left(\bx{\tilde{\beta}}(Cs_\alpha s+\mathfrak{p}^n)\right)=\psi(\Tr(Cb_i s_\alpha s)),
\end{equation} 
then 
\begin{equation}\label{fail}
Cb_is_\alpha\in\mathfrak{p}^{-\ell},
\end{equation}
which implies that $\nu(s_\alpha)\geq n$ since $\nu(b_i)=-(n+\ell)$ (when $\Phi={\sf G}_2$ we must assume $p\geq 5$ since $C$ can be $\pm 3$). Hence $s_\alpha\in\mathfrak{p}^n$. This shows that $u=\bx{\tilde{\beta}}(s_{\tilde{\beta}}+\mathfrak{p}^n)$ for some $s_{\tilde{\beta}}\in \mathcal{O}$ and so $u\in X_{\tilde{\beta}}\subseteq Z(U)$
 which shows that $\chi_i$ is a generic character of $Z(U)$.
\end{proof}
\begin{remark}\label{remaark-fail} When $p=2$, the above argument fails since $C$ can be $\pm 2$ and then we can not deduce from~\eqref{fail} that $\nu(s_\alpha)\geq n$. This is crucial when one wants to apply the Stone-von Neumann theorem. 
\end{remark}
In order to apply the Stone-von Neumann theorem, we then need to find the polarizing subgroup of $U$.  Define 
\begin{equation}\label{polar}
A=\left\langle\bar{x}_{\tilde{\beta}}(t),\,\,\bar{x}_{\alpha_i}(t): 1\leq i\leq d,\, t\in R\right\rangle,
\end{equation}
where $\alpha_i$ are chosen with respect to the decomposition~\eqref{Sigma-decomposition}. We will show that $A$ is a polarizing subgroup of $U$ with respect to the generic character $\chi_i$ defined above.  Notice that for any $\alpha_i$ and $\alpha_j$, $1\leq i,j\leq d$, chosen from the first disjoint component of the decomposition~\eqref{Sigma-decomposition}, neither $\alpha_i+\alpha_j$ nor $\alpha_i+\tilde\beta$ is a root. Hence, the right hand side of~\eqref{Che-comm} is always zero for elements in $A$, and therefore $A$ is an abelian subgroup of $U$ containing $X_{\tilde{\beta}}$.
\begin{proposition}\label{polar-prop} Let $p\geq 3$ when $\Phi\neq {\sf G}_2$ and $p\geq 5$ when $\Phi={\sf G}_2$. 
Let $\chi_1$ be a one-dimensional representation of $Z(U)$ corresponding to 
$b_1\in\mathfrak p^{-(n+\ell)}$ via Lemma \ref{Character-local}. Assume that $\nu(b_1)=-(n+\ell)$. 
Then $A$ is a polarizing subgroup with respect to $\chi_1$. 
\end{proposition}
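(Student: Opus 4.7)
The plan is as follows. Since the opposing inclusion $\bar A \subseteq \bar A^\perp$ is already free (as noted in the paragraph preceding the proposition, $A$ is an abelian subgroup of $U$ containing $Z(U)=X_{\tilde\beta}$, so $\chi_1([a,a'])=\chi_1(\1)=1$ for all $a,a'\in A$), the real content is to prove the reverse inclusion $\bar A^\perp \subseteq \bar A$. Thus I would start by fixing an arbitrary element $u\in U$ satisfying $\chi_1([u,a])=1$ for every $a\in A$, and aim to show that $u$ must already lie in $A$.

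To this end, I would write $u$ in its canonical form coming from \eqref{uniq}:
$$
u = \bx{\tilde\beta}(t_{\tilde\beta}+\mathfrak p^n)\,\prod_{i=1}^{d}\bx{\alpha_i}(t_{\alpha_i}+\mathfrak p^n)\bx{\tilde\beta-\alpha_i}(t_{\tilde\beta-\alpha_i}+\mathfrak p^n),
$$
with the factors in the fixed total order, where $t_{\tilde\beta},\,t_{\alpha_i},\,t_{\tilde\beta-\alpha_i}\in\mathcal O$. The goal is to show $t_{\tilde\beta-\alpha_i}\in\mathfrak p^n$ for each $1\le i\le d$, so that $u\in A$. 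For each fixed $i$, I would test against the element $\bx{\alpha_i}(s+\mathfrak p^n)\in A$ for an arbitrary $s\in\mathcal O$. Because $U$ is two-step nilpotent with center $X_{\tilde\beta}$ (Proposition~\ref{Prop-3}), the map $y\mapsto[u,y]$ is a homomorphism into $X_{\tilde\beta}$, and by Lemma~\ref{Lemma-rootproperty} together with the Chevalley commutator formula~\eqref{Che-comm}, the only factor in the product for $u$ whose commutator with $\bx{\alpha_i}(s+\mathfrak p^n)$ is non-trivial is $\bx{\tilde\beta-\alpha_i}(t_{\tilde\beta-\alpha_i}+\mathfrak p^n)$. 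Invoking Lemma~\ref{p=3,lemma} for root systems other than ${\sf G}_2$ (and its ${\sf G}_2$ analogue under the hypothesis $p\geq 5$), this commutator equals $\bx{\tilde\beta}(C\,t_{\tilde\beta-\alpha_i}s+\mathfrak p^n)$ with $C\in\{\pm1,\pm2\}$ (respectively $\{\pm1,\pm2,\pm3\}$ for ${\sf G}_2$).

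Applying $\chi_1$ and using \eqref{char-XB}, the hypothesis $\chi_1([u,\bx{\alpha_i}(s+\mathfrak p^n)])=1$ becomes
$$
\psi\bigl(\Tr(C\,b_1\,t_{\tilde\beta-\alpha_i}\,s)\bigr)=1\quad\text{for every }s\in\mathcal O.
$$
By Lemma~\ref{Character-local}, this forces $C\,b_1\,t_{\tilde\beta-\alpha_i}\in\mathfrak p^{-\ell}$. Since $p\geq 3$ (and $p\geq 5$ for ${\sf G}_2$), $C$ is a unit in $\mathcal O$, and since $\nu(b_1)=-(n+\ell)$, we conclude $\nu(t_{\tilde\beta-\alpha_i})\geq n$, i.e.\ $t_{\tilde\beta-\alpha_i}\in\mathfrak p^n$, so the corresponding factor is trivial in $\G(R)$. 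The main technical point is just the previous paragraph's commutator computation: verifying that no other factor of $u$ can contribute a non-trivial element of $X_{\tilde\beta}$ when commuted with $\bx{\alpha_i}(s+\mathfrak p^n)$, which amounts to checking that for $\alpha,\gamma\in\Sigma^+$, the sum $\alpha+\gamma$ lies in $\Phi$ only when $\gamma=\tilde\beta-\alpha$, and that $\tilde\beta+\alpha\notin\Phi$ by maximality of $\tilde\beta$. Doing this for every $i$ yields that $u$ lies in $A$, completing the proof of $\bar A^\perp\subseteq \bar A$ and hence of the proposition.
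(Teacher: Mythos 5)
Your proof is correct and relies on exactly the same computational core as the paper's: for each $i$, test $u$ against $\bar{x}_{\alpha_i}(s+\mathfrak p^n)\in A$, observe via Lemma~\ref{Lemma-rootproperty} and the commutator formula~\eqref{Che-comm} that only the $\bar{x}_{\tilde\beta-\alpha_i}$ factor of $u$ contributes to $[u,\bar{x}_{\alpha_i}(s+\mathfrak p^n)]$, then invoke Lemma~\ref{p=3,lemma} to see the Chevalley constant $C$ is a unit under the hypothesis on $p$, and finally use $\nu(b_1)=-(n+\ell)$ to force $t_{\tilde\beta-\alpha_i}\in\mathfrak p^n$. The organization, however, is different: the paper argues by contradiction, choosing $u\in U\setminus A$ of minimal length in the canonical factorization~\eqref{uniq}, reducing to the case where the leftmost factor lies in some $X_{\tilde\beta-\alpha_j}$ and deriving a contradiction from that single factor. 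You instead take an arbitrary $u\in\bar A^\perp$, write its full canonical factorization once, and kill every coefficient $t_{\tilde\beta-\alpha_i}$ simultaneously, concluding $u\in A$ directly. This is arguably cleaner; the minimal-length device in the paper buys nothing beyond a bookkeeping shortcut, while your version makes explicit the one non-trivial verification both proofs rely on, namely that for $\alpha,\gamma\in\Sigma^+$ the sum $\alpha+\gamma$ is a root only when $\gamma=\tilde\beta-\alpha$, and $\tilde\beta+\gamma\notin\Phi$ by maximality. One small slip: you state that $y\mapsto[u,y]$ is a homomorphism, but what your decomposition of $[u,\bar{x}_{\alpha_i}(s+\mathfrak p^n)]$ into factorwise commutators actually uses is that $x\mapsto[x,y]$ is a homomorphism for fixed $y$; in a two-step nilpotent group both hold, so this is cosmetic rather than a gap.
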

\begin{proof}
We have shown that $A$ is an abelian subgroup and so $A$ is an isotropic subgroup of $U$. Assume $A$ is not a polarizing subgroup.  Each $u\in U$ has a unique presentation~\eqref{uu}. By the length of $u\in U$ we mean the number of terms in~\eqref{uu}. Let $u\in U\setminus A$ be an element with the shortest length such that
\begin{equation}\label{contra-pola}
\chi_1\left([u,a]\right)=1, \qquad \forall a\in A.
\end{equation}
Let us denote the unique presentation of $u$ as follows 
\begin{equation}\label{uuu}
u=\prod_{\langle\alpha,\tilde{\beta}\rangle\geq 1}\bar{x}_{\alpha}(s_\alpha+\mathfrak{p}^n)\qquad s_\alpha\in \mathcal{O}.
\end{equation}
 We claim that the leftmost term in the product in~\eqref{uuu} cannot belong to either of $X_{\tilde\beta}$ and $X_{\alpha_i}$, $1\leq i\leq d$. That is because otherwise, one can eliminate this  term and obtain another element in $U\setminus A$ with shorter length that satisfies in~\eqref{contra-pola}.
We again remark that for $\alpha,\beta\in \Sigma^+$ we have $[X_\alpha,X_\beta]=\bold{1}$ unless $\beta=\tilde{\beta}-\alpha$. Without loss of generality we can write
$$
u=\bar{x}_{\tilde{\beta}-\alpha_1}(s_{\alpha_1}+\mathfrak{p}^n)u' \qquad s_{\alpha_1}\in \mathcal{O}\setminus \mathfrak{p}^n,
$$
where $\alpha_1$ is taken from the decomposition~\eqref{Sigma-decomposition}. Notice that none of the elements of $X_{\tilde{\beta}-\alpha_1}$ appears in the factorization of $u'$.
Hence for an arbitrary $s\in\mathcal{O}$ we have
\begin{equation*}
\begin{split}
[u,\bar{x}_{\alpha_1}(s+\mathfrak{p}^n)]&=[\bar{x}_{\tilde{\beta}-\alpha_1}(s_{\alpha_1}+\mathfrak{p}^n),\bar{x}_{\alpha_1}(s+\mathfrak{p}^n)][u',\bar{x}_{\alpha_1}(s+\mathfrak{p}^n)]\\
&=[\bar{x}_{\tilde{\beta}-\alpha_1}(s_{\alpha_1}+\mathfrak{p}^n),\bar{x}_{\alpha_1}(s+\mathfrak{p}^n)]=\bx{\tilde{\beta}}(Cs_{\alpha_1} s+\mathfrak{p}^n)
\end{split}
\end{equation*}
where $C\in\{\pm 1,\pm 2,\pm 3\}$. Hence from~\eqref{contra-pola} we deduce that for any $s\in\mathcal{O}$
\begin{equation}\label{Cb_1s_1}
1=\chi_1\left([u,\bx{\alpha_1}(s+\mathfrak{p}^n)]\right)=\psi(\Tr(Cb_1s_{\alpha_1}s)).
\end{equation}
Therefore we should have $Cb_1s_{\alpha_1}\in \mathfrak{p}^{-\ell}$. By Lemma~\ref{p=3,lemma}, for all root systems other than ${\sf G}_2$, the contradiction $s_{\alpha_1}\in \mathfrak{p}^n$ is obtained when $p\geq 3$. However, for the root system ${\sf G}_2$ the further assumption $p\geq 5$ is required in order to obtain a contradiction.
\end{proof}
We now compute the index of $A$ in $U$.
\begin{lemma}\label{index}
Let $d$ be as in Table~\ref{dvalue}. Then $[U:A]=q^{nd}$.
\end{lemma}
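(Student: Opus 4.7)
The plan is to compute $|U|$ and $|A|$ separately, with both computations reduced to the unique factorization statement given after equation~\eqref{uniq} together with Lemma~\ref{Xbeta} (which identifies each $X_\alpha$ with the additive group of $R$, so $|X_\alpha|=q^n$).

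For $|U|$: by Lemma~\ref{Lemma-rootproperty}, the roots $\alpha$ with $\langle\alpha,\tilde\beta\rangle\geq 1$ are precisely $\Sigma^+\cup\{\tilde\beta\}$, and by~\eqref{Sigma-decomposition} this set has cardinality $2d+1$. Because each element of $U$ admits a unique expression of the form $\prod_{\langle\alpha,\tilde\beta\rangle\geq 1}\bar x_\alpha(t_\alpha)$ taken in the fixed total ordering $\prec$, the assignment $(t_\alpha)_\alpha\mapsto \prod_\alpha \bar x_\alpha(t_\alpha)$ is a bijection $R^{2d+1}\to U$. Hence $|U|=q^{n(2d+1)}$.

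For $|A|$: the generating one-parameter subgroups $X_{\tilde\beta}, X_{\alpha_1},\dots,X_{\alpha_d}$ pairwise commute. Indeed, as already noted just before Proposition~\ref{polar-prop}, for $\alpha_i,\alpha_j$ chosen from the first half of the decomposition~\eqref{Sigma-decomposition} neither $\alpha_i+\alpha_j$ nor $\alpha_i+\tilde\beta$ is a root (if $\alpha_i+\alpha_j$ were a root then $\langle\alpha_i+\alpha_j,\tilde\beta\rangle=2$ would force, by Lemma~\ref{Lemma-rootproperty}, $\alpha_i+\alpha_j=\tilde\beta$, i.e.\ $\alpha_j=\tilde\beta-\alpha_i$, contradicting the choice of representatives; and $\alpha_i+\tilde\beta$ is excluded by maximality of $\tilde\beta$). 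The Chevalley commutator formula~\eqref{Che-comm} therefore gives trivial brackets, so $A$ is abelian and every $a\in A$ admits an expression $\bar x_{\tilde\beta}(t_0)\prod_{i=1}^d \bar x_{\alpha_i}(t_i)$ arranged in the order induced by $\prec$. Viewing this as a special case of the unique expression~\eqref{uniq} with $t_\alpha=0$ for $\alpha\in\{\tilde\beta-\alpha_1,\dots,\tilde\beta-\alpha_d\}$, uniqueness forces the $t_i$ to be determined by $a$. Thus $|A|=q^{n(d+1)}$.

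Combining the two counts, $[U:A]=q^{n(2d+1)}/q^{n(d+1)}=q^{nd}$, as required. There is no genuine obstacle here: both counts reduce to the unique factorization in~\eqref{uniq}, which is already established in the paper. The only point requiring a brief justification is that the factorization of elements of $A$ has no contribution from the roots $\tilde\beta-\alpha_i$, and this is immediate from the pairwise commutativity of the generators of $A$ combined with the injectivity of the map $R^{2d+1}\to U$.
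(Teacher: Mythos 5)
Your proof is correct and follows essentially the same route as the paper: count $|U|=q^{n(2d+1)}$ and $|A|=q^{n(d+1)}$ via the unique factorization of \eqref{uniq} and Lemma~\ref{Xbeta}, then divide. The paper's proof is terser (it invokes \eqref{Heis-Par} and \eqref{uniq} and states the two cardinalities without the intermediate justifications), while you supply the details — in particular the verification that the abelian structure of $A$, which the paper had already observed just before Proposition~\ref{polar-prop}, lets you read off $|A|$ from the unique normal form with vanishing $\tilde\beta-\alpha_i$ coordinates.
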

\begin{proof}
Recall that $|\Sigma^+|=2d$. From~\eqref{Heis-Par} and uniqueness of the product in ~\eqref{uniq} we deduce that $|U|=q^{(2d+1)n}$ and $|A|=q^{(d+1)n}$. Therefore, $[U:A]=q^{nd}$.
\end{proof}
\section{Proof of Theorem~\ref{Main-Theorem}}
In what follows the group of units of a given ring $R$ is denoted by $R^\times$. 
Let $(\rho, V)$ be a faithful representation of $\G\left(R\right)$. Let $\sigma:=\res{\rho}{U}$, be the restriction of $\rho$ to the Heisenberg subgroup $U$, and let $(\sigma_i,V_i)$, $1\leq i\leq k$, be the irreducible factors in the decomposition of the $U$-representation $(\sigma,V)$ with central characters $\chi_i$.  By Proposition~\ref{generic-prop}, we can assume that $\chi_1$ is a generic character of $Z(U)$, such that the element $b_1\in\mathfrak{p}^{-(n+\ell)}$ associated to $\chi_1$ in
Lemma \ref{Character-local}
satisfies $\nu(b_1)=-(n+\ell)$.  Then, by Proposition~\ref{polar-prop}, $A$ is a polarizing subgroup with respect to the generic character $\chi_1$. Therefore, by Stone-von Neumann theorem and Lemma~\ref{index}, we have 
$$
\dim(V_1)= [U:A]=q^{dn}.
$$
For any $\bar{\lambda}=\lambda+\mathfrak{p}^n\in R^\times$, and any root $\alpha\in\Phi$, consider the element $h_\alpha(\bar{\lambda})$ introduced in~\eqref{hlambda}. It follows from~\eqref{action-h} that $h_\alpha(\bar{\lambda})$ normalizes any one parameter subgroup. Therefore, $h_\alpha(\bar{\lambda})$ normalizes $U$.
Define the $U$-representation $\sigma^{\bar{\lambda},\alpha}$ to be the conjugation of $\sigma$ by $h_\alpha(\bar{\lambda})$:
\begin{equation*}
\sigma^{\bar{\lambda},\alpha}: U \to \GL(V),\qquad
 u \mapsto \sigma\left(h_\alpha(\bar{\lambda})uh_\alpha(\bar{\lambda})^{-1}\right).
\end{equation*}
Notice that the $U$-intertwining operator $\rho(h_{\alpha}(\bar{\lambda}))$ gives a $U$-isomorphism between $(\sigma,V)$ and $(\sigma^{\bar{\lambda},\alpha},V)$. Therefore, $(\sigma_1^{\bar{\lambda},\alpha},V_1)$ is also an irreducible subrepresentation of $(\sigma,V)$. Hence for any $z\in X_{\tilde\beta}\subseteq\CG(U)$ and $v\in V_1$ we have
$$
\chi_1(h_\alpha(\bar{\lambda}) zh_\alpha(\bar{\lambda})^{-1})v=\sigma_1(h_\alpha(\bar{\lambda}) zh_\alpha(\bar{\lambda})^{-1})v=\sigma_1^{\bar{\lambda},\alpha}(z)v=\chi_1^{\bar{\lambda},\alpha}(z)v,
$$
where $\chi_1^{\bar{\lambda},\alpha}$ is the one-dimensional representation of $Z(U)$ which is the central character of $\sigma_1^{\bar{\lambda},\alpha}$. Then for $s+\mathfrak{p}^n\in R\cong X_{\tilde{\beta}}$, from~\eqref{action-h}, we obtain
$$
\chi_1^{\bar{\lambda},\alpha}
\big(\bar{x}_{\tilde{\beta}}(s+\mathfrak{p}^n)\big)
=\chi_1\left(\bar{x}_{\tilde{\beta}}(\lambda^{\langle\tilde{\beta},\alpha\rangle}s+\mathfrak{p}^n)\right)=\psi\left(\Tr\left(b_1\lambda^{\langle\tilde{\beta},\alpha\rangle}s\right)\right).
$$
Next, we count the number of mutually distinct one-dimensional representations of $\chi_1^{\bar{\lambda},\alpha}$. We consider two cases:
\begin{enumerate}
\item The root system $\Phi$ is not
${\sf A}_1$ or ${\sf C}_m$ for $m\geq 2$: Equation~\eqref{F(Phi)} implies that there exists a root $\alpha\in\Phi$ such that $\langle \tilde{\beta},\alpha\rangle=1$. Hence, for this particular root $\alpha$ we have
$$
\chi_1^{\bar{\lambda},\alpha}
(\bar{x}_{\tilde{\beta}}(s+\mathfrak{p}^n))=
\chi_1(\lambda \bar{x}_{\tilde{\beta}}(s+\mathfrak{p}^n))=\psi\left(\Tr\left(b_1\lambda s\right)\right),\qquad \forall s\in \mathcal{O}.
$$
Since $\chi_1$ is corresponding to $b_1\in\mathfrak{p}^{-(n+\ell)}$ with $\nu(b_1)=-(n+\ell)$, we can conclude that $\chi_1^{\bar{\lambda}_1,\alpha}\neq \chi_1^{\bar{\lambda}_2,\alpha}$ when $\bar{\lambda}_1\neq \bar{\lambda}_2\in R^\times$, since otherwise $b_1(\lambda_1-\lambda_2)\in\mathfrak{p}^{-\ell}$ which implies $\lambda_1-\lambda_2\in\mathfrak{p}^n$. Hence, there are $q^n-q^{n-1}$ distinct one-dimensional representations $\chi_1^{\bar{\lambda},\alpha}$ and therefore, there are at least $q^n-q^{n-1}$ non-isomorphic irreducible subrepresentations of $V$, each of dimension $q^{dn}$. Hence,
$$
\dim(V)\geq(q^n-q^{n-1})q^{dn},
$$
where $d$ is given in Table~\ref{dvalue}.
\item The root system $\Phi$ is either ${\sf C}_m$ or ${\sf A}_1$: Equation~\eqref{F(Phi)} implies that for no root in $\Phi$, $\langle \tilde{\beta},\alpha\rangle=1$; but a root $\alpha$ can be chosen such that $\langle \tilde{\beta},\alpha\rangle=2$. Then for $s+\mathfrak{p}^n\in R\cong X_{\tilde{\beta}}$ we have
$$
\chi_1^{\bar{\lambda},\alpha}(
\bar{x}_{\tilde{\beta}}(s+\mathfrak{p}^n))
=\chi_1\left(
\bar{x}_{\tilde{\beta}}(
\lambda^2s+\mathfrak{p}^n)\right)=\psi\left(\Tr\left(b_1\lambda^2 s\right)\right).
$$
Hence, we can  construct $|R^\times|/2=(q^{n}-q^{n-1})/2$ distinct one-dimensional representations $\chi_1^{\bar{\lambda},\alpha}$, which leads to obtaining $(q^{n}-q^{n-1})/2$ non-isomorphic factors in the decomposition of $(\sigma,V)$, each of dimension $q^{(m-1)n}$.
Hence
$$
\dim(V)\geq \frac{1}{2}(q^{n}-q^{n-1})q^{(m-1)n}.
$$
\end{enumerate}
\section*{Acknowledgement}
M.B. would like to thank Alexander Gamburd for his interest in this project.  During the completion of this work, M.B. was supported by a postdoctoral fellowship from the University of Ottawa. He wishes to thank his supervisor Vadim Kaimanovich for useful discussions on the subject of this paper and Dmitry Jakobson and Mikael Pichot for their support and stimulating discussions during his visit at McGill University where part of this joint work was done. 
C.K. and H.S. thank Monica Nevins and Kirill Zaynullin for interesting discussions concerning this project. The authors are also very grateful to the referee for a very careful reading and many helpful suggestions and corrections.

\bibliographystyle{abbrv}

\end{document}